\newtheorem{lem}{Lemma}[section]
\newtheorem{prop}[lem]{Proposition}
\newtheorem{cor}[lem]{Corollary}
\newtheorem{thm}[lem]{Theorem}
\newcommand{\Ext}{\operatorname{Ext}\nolimits}
\newcommand{\mo}{\operatorname{mod}\nolimits}
\newcommand{\End}{\operatorname{End}\nolimits}
\newcommand{\add}{\operatorname{add}\nolimits}
\begin{document}
\title{Coloured quivers of type $A$ and the cell-growth problem} 
\author{Hermund Andr\' e Torkildsen} 

\begin{abstract} We use a geometric description of $m$-cluster
  categories of Dynkin type $A$ to count the the number of coloured
  quivers in the $m$-mutation class of quivers of Dynkin type
  $A$. This is related to angulations of polygons and the cell-growth
  problem.  
\end{abstract}

\maketitle


\section*{Introduction}

Quiver mutation \cite{fz1} induces an equivalence relation on the
set of quivers, and the mutation class of a quiver $Q$ consists of all
quivers mutation equivalent to $Q$. It was shown in \cite{br} that the
mutation class of an acyclic quiver $Q$ is finite if and only if 
the underlying graph of $Q$ is either Dynkin, extended Dynkin or has
at most two vertices. In \cite{to2} an analogous result was obtained
for coloured quivers. Let $Q$ be a coloured quiver obtained from an
$m$-cluster tilting object. Then $Q$ has finite mutation class if and
only if $Q$ is mutation equivalent to a quiver $Q'$, where the Gabriel
quiver of $Q'$ is Dynkin, extended Dynkin or has at most two vertices,
and $Q'$ has only arrows of colour $0$ and $m$.

For finite mutation classes, it is a natural question to ask how many
non-isomorphic quivers there are in the class. In \cite{to1,bto,brs}
explicit formulas were given to compute the number of non-isomorphic
quivers in the mutation class of quivers of type $A$, $D$ and
$\widetilde{A}$ respectively, and in this paper we will give an
explicit formula for the number of non-isomorphic coloured quivers of
type $A$. This generalizes a result in \cite{to1}. 

In \cite{bm1,bm2} the authors gave geometric descriptions of the
$m$-cluster categories of type $D$ and $A$. This generalizes the
geometric descriptions of cluster categories given in
\cite{ccs,s}. In this paper we will use this to investigate mutation
classes arising from $m$-cluster categories of type $A$. 

There is a $1-1$ correspondence between $m$-cluster tilting objects
and $(m+2)$-angulations of an $(nm+2)$-gon in this case. This enables
us to count the number of $m$-cluster tilting objects in the
$m$-cluster category. Furthermore we show that there is a $1-1$
correspondence between coloured quivers of $m$-cluster tilted algebras
and $(m+2)$-angulations of $(nm+2)$-gons, where two
$(m+2)$-angulations are considered equivalent if they are rotations of
each other. Counting such $(m+2)$-angulations is related to the
cell-growth problem investigated by for example Harary, Palmer and
Read in \cite{hpr}, and they provide the generating functions that we
need.


\section{$m$-cluster categories}

Cluster categories were defined in \cite{bmrrt} in the general case and
in \cite{ccs} in the $A$-case as a categorical model of the
combinatorics of cluster algebras. Some cluster categories have a nice
geometric description in terms of triangulations of certain polygons,
see \cite{ccs, s}. 

Let $H=kQ$ be a finite dimensional hereditary algebra over an
algebraically closed field $k$, where $Q$ is a quiver with $n$
vertices. Let $\mathcal{D}^b(H)$ be the bounded derived category of
$\mo H$. The orbit category $\mathcal{C}_H =
\mathcal{D}^b(H)/\tau^{-1}[1]$, where $\tau$ is the Auslander-Reiten
translation and $[1]$ is the shift functor, is called the cluster
category of $H$. In \cite{ccs} this category was defined as a category
of diagonals of a regular $(n+3)$-gon. The objects are direct sums of
diagonals and the morphisms are spanned by elementary moves modulo the
mesh-relations. For the $D$ case \cite{s} considered $n$-gons with a
puncture. Schiffler defined a category with direct sums of diagonals
(or tagged edges) as objects. The morphism space is spanned by
elementary moves modulo the mesh relations. He showed that this
category was equivalent to the cluster category of type $D$.

A generalization of cluster categories are the $m$-cluster
categories. Consider the orbit category $\mathcal{C}_{H}^{m} =
\mathcal{D}^b(H)/\tau^{-1}[m]$, for some positive integer $m$. This
category is called the $m$-cluster category of $H$. It has been
investigated in several papers, see for example
\cite{bm1,bm2,bt,iy,k,t,w,z,zz}. The $m$-cluster category is a  
Krull-Schmidt category for all $m$, and it has an AR-translate 
$\tau$. From \cite{k} we also know that it is a triangulated category
for all $m$. The indecomposable objects in $\mathcal{C}_H^m$ are of
the form $X[i]$, with $0 \leq i < m$, where $X$ is an indecomposable 
$H$-module, or of the form $P[m]$, where $P$ is a projective
$H$-module. 

Baur and Marsh considered in \cite{bm1,bm2} $m$-cluster categories,
and they generalized \cite{s,ccs} for arbitrary $m$'s. In this paper
we will consider their generalization of geometric descriptions of
$m$-cluster categories of type $A$.

If $T$ is an object in $\mathcal{C}_H^m$ with the property that $X$ is
in $\add T$ if and only if $\Ext_{\mathcal{C}_H^m}^i(T,X)=0$ for all
$i \in \{1,2,...,m\}$, then $T$ is called an $m$-cluster tilting
object. An object $X$ is called maximal $m$-rigid if it has the
property that $X \in \add T$ if and only if
$\Ext_{\mathcal{C}_H^m}^i(T \oplus X,T \oplus X)=0$ for all $i \in
\{1,2,...,m\}$. In \cite{w,zz} it was shown that an object which is
maximal $m$-rigid is also an $m$-cluster tilting object, and that an
$m$-cluster tilting object $T$ always has $n$ non-isomorphic
indecomposable summands \cite{z}. 

If $T$ is an $m$-cluster tilting object in $\mathcal{C}_H^m$, the
algebra $\End_{\mathcal{C}_H^m}(T)$ is called an $m$-cluster tilted
algebra. In \cite{to1} it was shown that $1$-cluster tilted algebras
of type $A_n$ (up to isomorphism) are in $1-1$ correspondence with
triangulations of $(n+3)$-gons, where two triangluations are
considered equivalent if they are rotations of each other. A similar
result was obtained in \cite{bto} for type $D$. We will see that this
does not hold for an arbitrary $m$.


\section{Quiver mutation}

Let $\bar{T}$ be an object in $\mathcal{C}_H^m$ with $n-1$
non-isomorphic indecomposable direct summands such that 
$\Ext_{\mathcal{C}_H^m}^i(\bar{T},\bar{T})=0$ for $i \in
\{1,2,...,m\}$. Such an object is called an almost complete
$m$-cluster tilting object, and in \cite{w,zz} they show that
$\bar{T}$ always has exactly $m+1$ complements.

Let $T_k^{(c)}$, where $c\in \{0,1,2,...,m\}$, be the complements
of $\bar{T} = T/T_k$. Then the complements are connected by $m+1$ exchange
triangles \cite{iy} 

$$T_k^{(c)} \rightarrow B_k^{(c)} \rightarrow T_k^{(c+1)}\rightarrow,$$
where $B_k^{(c)}$ is in $\add{\bar{T}}$.

Let $T$ be an $m$-cluster tilting object in $\mathcal{C}_H^m$. In
\cite{bt} the authors associate to $T$ a quiver $Q_T$ in the following
way. There is a vertex in $Q_T$ for every indecomposable summand of
$T$, and the arrows have colours chosen from the set
$\{0,1,2,...,m\}$. If $T_i$ and $T_j$ are two indecomposable summands of
$T$ corresponding to vertex $i$ and $j$ in $Q_T$, there are $r$ arrows
from $i$ to $j$ of colour $c$, where $r$ is the multiplicity of $T_j$
in $B_i^{(c)}$.

They show that quivers obtained in this way have the following
properties.
\begin{enumerate}
\item There are no loops.
\item If there is an arrow from $i$ to $j$ with colour $c$, then there exists no arrow
from $i$ to $j$ with colour $c' \neq c$.
\item If there are $r$ arrows from $i$ to $j$ of colour $c$, then there are $r$
arrows from $j$ to $i$ of colour $m-c$.
\end{enumerate}

Coloured quiver mutation keeps track of the exchange of indecomposable summands
of an $m$-cluster tilting object. The mutation of $Q_T$ at vertex $j$
is a quiver $\mu_j(Q_T)$ obtained as follows.  

\begin{enumerate}
\item For each pair of
  arrows $$\xymatrix{i\ar[r]^{(c)}&j\ar[r]^{(0)}&k\\}$$ where $i \neq
  k$ and $c \in \{0,1,...,m\}$, add an arrow from $i$ to $k$ of colour
  $c$ and an arrow from $k$ to $i$ of colour $m-c$. 
\item If there exist arrows of different colours from a vertex $i$ to
  a vertex $k$, cancel the same number of arrows of each colour until
  there are only arrows of the same colour from $i$ to $k$. 
\item Add one to the colour of all arrows that goes into $j$, and
  subtract one from the colour of all arrows going out of $j$.
\end{enumerate}

%

In \cite{bt} the following theorem is proved.

\begin{thm}
Let $T = \oplus_{i=1}^{n} T_i$ be an $m$-cluster tilting object in
$\mathcal{C}_H^m$. Let $T' = T / T_j \oplus T_j^{(1)}$ be an
$m$-cluster tilting object where there is an exchange triangle $$T_j
\rightarrow B_j^{(0)} \rightarrow T_j^{(1)} \rightarrow.$$ Then
$Q_{T'} = \mu_j(Q_T)$. 
\end{thm}

Let us denote by $Q_G$, where $Q=Q_T$ is a coloured quiver and $T$ an
$m$-cluster tilting object, the quiver obtained from $Q$ by removing
all arrows with colour different from $0$. Then the $m$-cluster tilted
algebra $\End_{\mathcal{C}_H^m}(T)$ has Gabriel quiver $Q_G$. From
\cite{zz} we have the following proposition and corollary. See also
\cite{bt}. 

\begin{prop}\label{tiltreached}
Any $m$-cluster tilting object can be reached from any other
$m$-cluster tilting object via iterated mutation.
\end{prop}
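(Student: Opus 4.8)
The plan is to translate the statement into the combinatorics of polygon dissections and then invoke connectivity of a flip graph. In type $A$ each $m$-cluster tilting object $T$ corresponds to an $(m+2)$-angulation $\Delta_T$ of the $(nm+2)$-gon, with the indecomposable summands of $T$ matching the interior diagonals of $\Delta_T$. Deleting the diagonal attached to a summand $T_j$ produces an almost complete dissection in which the two $(m+2)$-cells adjacent to that diagonal merge into a single $(2m+2)$-gon; that $(2m+2)$-gon admits exactly $m+1$ subdivisions into two $(m+2)$-cells, and these recover precisely the $m+1$ complements connected by the exchange triangles of the previous section. Hence mutation at $T_j$ realizes a \emph{flip}: one diagonal of $\Delta_T$ is replaced by one of the $m$ alternatives, all $m+1$ complements being reached by iterating $\mu_j$. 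Connectivity of the mutation graph of $m$-cluster tilting objects is therefore equivalent to connectivity of the flip graph on $(m+2)$-angulations of the $(nm+2)$-gon.

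To prove that the flip graph is connected I would fix a reference \emph{fan} angulation $\Delta_0$ obtained by drawing all admissible diagonals from a single vertex, and show that every angulation $\Delta$ can be carried to $\Delta_0$ by finitely many flips; since flips are symmetric, any two angulations are then joined through $\Delta_0$. The argument is an induction on $n$ via \emph{ear removal}. The dual graph of an $(m+2)$-angulation is a tree whose nodes are the cells and whose edges are the diagonals, so it has at least two leaves; a leaf is a cell meeting the boundary in $m+1$ consecutive edges, that is, an ear. Flipping inside $\Delta$ so as to create an ear in the position dictated by $\Delta_0$, one cuts this ear off along its single diagonal, reducing the $(nm+2)$-gon to an $((n-1)m+2)$-gon on which the inductive hypothesis applies.

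The main obstacle is the interface between the two halves of the argument. On the categorical side one must verify carefully that the exchange triangles $T_j^{(c)} \to B_j^{(c)} \to T_j^{(c+1)}$ correspond exactly to the elementary diagonal flips, so that passing between complements never merges or splits cells in an uncontrolled way and the bijection with angulations is respected by mutation. On the combinatorial side the delicate point is the inductive step: showing that $\Delta$ can always be flipped to expose the ear required to match $\Delta_0$, which amounts to controlling a monovariant such as the number of diagonals incident to the chosen vertex and checking that it can be driven up without ever leaving the class of genuine $(m+2)$-angulations. Once this dictionary and this monovariant are in place, the ear-removal induction is the classical connectivity argument for polygon dissections, and the proposition follows.
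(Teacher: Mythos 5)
The proposition in the paper is not proved there at all: it is quoted from the references \cite{zz} (see also \cite{bt}), where it is established for the $m$-cluster category $\mathcal{C}_H^m$ of an \emph{arbitrary} finite acyclic quiver $Q$. That generality is the source of the main gap in your proposal. The statement sits in the section on general $m$-cluster categories, before the paper restricts to type $A$, and the corollary drawn from it (all quivers of $m$-cluster tilted algebras arise by repeated mutation of $Q$) is used for arbitrary acyclic $Q$. Your entire strategy -- translating tilting objects into $(m+2)$-angulations of an $(nm+2)$-gon and proving connectivity of the flip graph -- is only available in type $A$, where the Baur--Marsh geometric model exists. For a general hereditary $H$ there is no polygon, so your argument cannot even be started; one has to work categorically, as in \cite{zz}, using the fact that an almost complete $m$-cluster tilting object has exactly $m+1$ complements connected by exchange triangles, together with an induction internal to the category.

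Even restricted to type $A$, where your plan is reasonable, you have flagged rather than closed the two essential steps. The dictionary between the exchange triangles $T_j^{(c)} \to B_j^{(c)} \to T_j^{(c+1)}$ and rotation of the inner $(2m+2)$-gon is genuinely a theorem (it is the content of the Baur--Marsh/Buan--Thomas correspondence cited in the paper), so that half can be discharged by citation; but the combinatorial half -- that every $(m+2)$-angulation can be flipped to the fan at a fixed vertex -- is exactly the point you defer to ``controlling a monovariant,'' and without carrying out that induction you have not proved connectivity of the flip graph. So as written the proposal is a proof outline of a special case of the proposition, not a proof of the proposition. If your goal is the type $A$ case only, the outline is salvageable; if the goal is the statement as the paper uses it, you need the categorical argument of \cite{zz}.
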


\begin{cor}
For an $m$-cluster category $\mathcal{C}_H^m$ of the acyclic quiver
$Q$, all quivers of $m$-cluster tilted algebras are given by repeated 
mutation of $Q$.
\end{cor}

For $m=1$, it was shown in \cite{br} that the mutation class of an
acyclic quiver $Q$ if finite if and only if the underlying graph of
$Q$ is either Dynkin, extended Dynkin or $Q$ has at most two
vertices. This was generalized in \cite{to2}, and we have the
following theorem and corollary.

\begin{thm}
Let $k$ be an algebraically closed field and $Q$ a connected finite
quiver without oriented cycles. The following are equivalent for $H =
kQ$.
\begin{enumerate}
\item There are only a finite number of basic $m$-cluster tilted algebras
  associated with $H$, up to isomorphism. 
\item There are only a finite number of Gabriel quivers occurring for
  $m$-cluster tilted algebras associated with $H$, up to isomorphism.
\item $H$ is of finite or tame representation type, or has at most two
  non-isomorphic simple modules.
\item There are only a finite number of $\tau$-orbits of $m$-cluster
  tilting objects associated with $H$. 
\item There are only a finite number of coloured quivers occurring for
  $m$-cluster tilting objects associated with $H$, up to isomorphism.
\item The mutation class of a coloured quiver arising from an
  $m$-cluster tilting object associated with $H$, is finite. 
\end{enumerate}
\end{thm}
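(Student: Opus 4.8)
The plan is to prove the six conditions equivalent by establishing the single cycle of implications $(1)\Rightarrow(2)\Rightarrow(3)\Rightarrow(4)\Rightarrow(5)\Rightarrow(6)\Rightarrow(1)$, using Proposition~\ref{tiltreached} and the action of the AR-translate $\tau$ for the bookkeeping. Several of these arrows are formal. For $(1)\Rightarrow(2)$ the Gabriel quiver is an isomorphism invariant of an algebra, so finitely many $m$-cluster tilted algebras can produce only finitely many Gabriel quivers. For $(4)\Rightarrow(5)$ I would observe that $\tau$ is a triangle auto-equivalence of $\mathcal{C}_H^m$, hence carries an $m$-cluster tilting object $T$ to another such object and carries each exchange triangle $T_j\to B_j^{(c)}\to T_j^{(c+1)}\to$ to the corresponding triangle for $\tau T$; the multiplicities defining the coloured quiver are therefore unchanged, so $Q_{\tau T}\cong Q_T$. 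Thus the coloured quiver is constant on $\tau$-orbits, and finitely many orbits yield finitely many coloured quivers.

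For $(5)\Rightarrow(6)$ and $(6)\Rightarrow(1)$ I would use Proposition~\ref{tiltreached}: since every $m$-cluster tilting object is reached from a fixed one by iterated mutation, the full mutation class of the coloured quiver of any chosen tilting object coincides with the set of all coloured quivers that occur. Hence $(5)$ and $(6)$ are two phrasings of the same finiteness, and $(5)\Rightarrow(6)$ is immediate. To close the cycle with $(6)\Rightarrow(1)$, I would pass from coloured quivers to Gabriel quivers by deleting all arrows of colour $\neq 0$, which is a well-defined map, so finitely many coloured quivers give finitely many Gabriel quivers; it then remains to check that a fixed finite Gabriel quiver underlies only finitely many $m$-cluster tilted algebras up to isomorphism, which follows because these are finite-dimensional algebras supported on a fixed finite quiver and admit only boundedly many admissible relations.

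The genuine content is concentrated in $(2)\Rightarrow(3)$ and $(3)\Rightarrow(4)$, and I expect $(2)\Rightarrow(3)$ to be the main obstacle. For $(3)\Rightarrow(4)$ I would split into cases according to the representation type of $H$: in the finite (Dynkin) case $\mathcal{C}_H^m$ has only finitely many indecomposable objects, so there are finitely many $m$-cluster tilting objects and a fortiori finitely many $\tau$-orbits; in the tame (extended Dynkin) case one exploits the tube structure of $\mathcal{C}_H^m$ to show that, although there are infinitely many tilting objects, they fall into finitely many $\tau$-orbits; and the case of at most two simple modules is settled by direct inspection. For the contrapositive of $(2)\Rightarrow(3)$, I would assume $H$ is wild with at least three non-isomorphic simple modules and produce infinitely many pairwise non-isomorphic Gabriel quivers of $m$-cluster tilted algebras. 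Here I would reduce to the known $m=1$ statement of \cite{br}: the decisive point is to relate coloured quiver mutation to ordinary quiver mutation on the Gabriel quiver $Q_G$, controlling how the colour-adjustment and cancellation steps of coloured mutation interact with the colour-$0$ part, so that the infinitude of the $1$-mutation class of a wild quiver forces infinitely many Gabriel quivers for general $m$. Establishing this comparison cleanly, in a way that survives the cancellation step of coloured mutation, is where I expect the real difficulty to lie.
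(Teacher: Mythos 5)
This theorem is quoted in the paper from \cite{to2}; the paper itself contains no proof of it, so there is nothing internal to compare your argument against. Judged on its own terms, your outline has the right global shape (a single cycle of implications, with the formal arrows $(1)\Rightarrow(2)$, $(4)\Rightarrow(5)$, $(5)\Leftrightarrow(6)$ via Proposition~\ref{tiltreached} handled correctly), and you correctly locate the substance in $(2)\Rightarrow(3)$ and $(3)\Rightarrow(4)$. But two of your steps have genuine gaps.

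First, your closing arrow $(6)\Rightarrow(1)$ rests on the claim that a fixed finite Gabriel quiver ``admits only boundedly many admissible relations,'' hence only finitely many algebras. That is false in general: a fixed finite quiver can carry continuous families of pairwise non-isomorphic admissible quotients, so finiteness of the set of quivers does not formally bound the set of algebras. To get $(1)$ you need something specific to $m$-cluster tilted algebras --- most naturally, that the algebra $\End_{\mathcal{C}_H^m}(T)$ depends only on the $\tau$-orbit (indeed the $[1]$-orbit) of $T$, so the clean route to $(1)$ is from $(4)$, not from a count of relations on a quiver. Second, the two directions you flag as the real content are left as intentions rather than arguments: for $(3)\Rightarrow(4)$ the tame case (``exploit the tube structure'') is exactly where the work lies and is not carried out, and for $(2)\Rightarrow(3)$ your plan of tracking how coloured mutation interacts with the colour-$0$ part through the cancellation step is, as you say yourself, the unsolved difficulty. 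A more standard route for the latter is to note that every tilting $H$-module is already an $m$-cluster tilting object and to import the infinitude of quivers from the $m=1$ result of \cite{br} via the tilted/cluster-tilted comparison, rather than to analyse coloured mutation combinatorially. As it stands the proposal is a reasonable roadmap but not a proof.
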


\begin{cor}
A coloured quiver $Q$ corresponding to an $m$-cluster tilting object,
has finite mutation class if and only if $Q$ is mutation equivalent to
a quiver $Q'$, where $Q'_G$ has underlying graph Dynkin or extended
Dynkin, or it has at most two vertices, and there are only arrows of
colour $0$ and $m$ in $Q'$.
\end{cor}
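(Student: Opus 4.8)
The plan is to read the statement off the theorem above, using the equivalence of its conditions (3) and (6) together with the classification of hereditary algebras by their Gabriel quivers. First I would fix the ambient category: the coloured quiver $Q$ arises from an $m$-cluster tilting object in some $\mathcal{C}_H^m$ with $H=kQ_0$ and $Q_0$ connected and acyclic. By Proposition~\ref{tiltreached} and the corollary following it, every quiver in the mutation class of $Q$ is the coloured quiver of an $m$-cluster tilting object of the \emph{same} category $\mathcal{C}_H^m$, and conversely. Hence ``$Q$ has finite mutation class'' is precisely condition (6) of the theorem for $H$, which by that theorem is equivalent to condition (3): $H$ is of finite or tame representation type, or has at most two non-isomorphic simple modules. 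By Gabriel's theorem, the classification of tame hereditary algebras, and the fact that a connected acyclic quiver has as many vertices as $H$ has simples, condition (3) is in turn equivalent to: the underlying graph of $Q_0$ is Dynkin or extended Dynkin, or $Q_0$ has at most two vertices.

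For the forward direction I would exhibit a concrete representative. The standard $m$-cluster tilting object of $\mathcal{C}_H^m$, namely the image of $H$ itself, has a coloured quiver $Q^0$ whose Gabriel quiver $(Q^0)_G$ equals $Q_0$; moreover, a direct computation of its exchange triangles shows that $Q^0$ carries only arrows of colour $0$, those of $Q_0$, together with their forced colour-$m$ reverses supplied by property (3) of coloured quivers. Thus if the mutation class is finite, then by the previous paragraph $Q_0$ has the required graph type, and $Q'=Q^0$ is a quiver mutation equivalent to $Q$ with $Q'_G$ Dynkin, extended Dynkin or on at most two vertices and with arrows only of colours $0$ and $m$.

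For the converse I would argue that a representative $Q'$ with only colours $0$ and $m$ lets us read the representation type of $\mathcal{C}_H^m$ off $Q'_G$. The key point, which I expect to be the main obstacle, is to show that a coloured quiver all of whose arrows have colour $0$ or $m$ is the coloured quiver of the standard tilting object of a hereditary algebra $H'=kQ'_G$ with $Q'_G$ acyclic, and that this $H'$ is derived equivalent to $H$, so that $\mathcal{C}_{H'}^m\simeq\mathcal{C}_H^m$. Granting this, derived-equivalent hereditary algebras have the same representation type, so the graph type of $Q'_G$ coincides with that of $Q_0$; if $Q'_G$ is Dynkin, extended Dynkin or on at most two vertices then $H'$, hence $H$, satisfies condition (3), and the theorem gives condition (6), i.e. finiteness of the mutation class of $Q'\sim Q$. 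Proving the structural claim about colour-$0,m$ quivers is where the work lies, and it must be extracted from the coloured mutation rule and the form of the exchange triangles; one checks separately that for $m=1$, where the colour condition is vacuous, the statement reduces to the known characterization of finite mutation classes of cluster-tilted algebras in \cite{br,to1}.
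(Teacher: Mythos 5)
The paper itself offers no proof of this corollary: both the preceding theorem and the corollary are imported from \cite{to2}, so there is no in-paper argument to compare with. Judged on its own terms, your forward direction is sound: Proposition \ref{tiltreached} identifies the mutation class of $Q$ with the set of coloured quivers of $m$-cluster tilting objects in $\mathcal{C}_H^m$, so finiteness is condition (6), hence condition (3), hence $Q_0$ is Dynkin, extended Dynkin or has at most two vertices; and the canonical tilting object $H\in\mathcal{C}_H^m$, whose coloured quiver is $Q_0$ with colour $0$ on its arrows plus the forced colour-$m$ reverses (a computation in \cite{bt}), supplies the required representative $Q'$.

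The converse, however, is a reduction to two claims you do not establish, and one of them fails as stated. First, you need $Q'_G$ acyclic to form $H'=kQ'_G$, but the hypotheses do not give this: for $m=1$ the colour condition is vacuous, and the oriented $3$-cycle in the mutation class of $A_3$ has $Q'_G$ with underlying graph $\widetilde{A}_2$ (extended Dynkin), so it satisfies every hypothesis of the corollary while $kQ'_G$ is not a finite-dimensional hereditary algebra; this case (and more generally cyclic $Q'_G$ of type $\widetilde{A}$) must be handled separately or excluded by a further argument. Second, the derived equivalence $D^b(H)\simeq D^b(H')$ is asserted, not proved, and it is doing no work. Coloured quiver mutation is purely combinatorial, so the mutation class of $Q'$ does not depend on which category realizes it: once you know that a colour-$\{0,m\}$ quiver with acyclic $Q'_G$ coincides with the coloured quiver of the canonical tilting object of $H'$ (which follows from properties (1)--(3) of coloured quivers together with the \cite{bt} description of that canonical quiver), the corollary after Proposition \ref{tiltreached} applied to $H'$ identifies the mutation class of $Q'$ with the set of coloured quivers of tilting objects in $\mathcal{C}_{H'}^m$, and the theorem applied to $H'$ gives finiteness directly, with no comparison between $H$ and $H'$ needed. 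So the derived-equivalence step should be dropped, and the genuine remaining work is the realization claim for colour-$\{0,m\}$ quivers together with the cyclic-$Q'_G$ case.
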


\section{Category of $m$-diagonals of a regular $(nm+2)$-gon}

From now on we want to consider only the $A$ case. Baur and Marsh 
gave in \cite{bm2} a geometric description of the $m$-cluster category
in the $A$ case, and we will use this to investigate the mutation class
of coloured quivers.

Let $n$ and $m$ be positive integers and $P_{n,m}$ a regular
$(nm+2)$-gon. Label the vertices on the border of the polygon
clockwise from $1$ to $nm+2$. See Figure \ref{figpolygon}. 

  \begin{figure}[htp]
  \begin{center}
    \includegraphics[width=4.5cm]{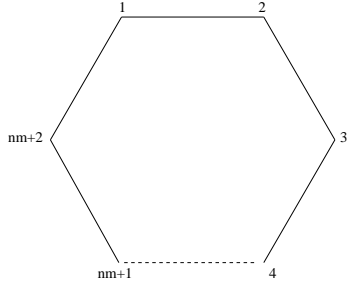}
  \end{center}\caption{\label{figpolygon} The $(nm+2)$-gon $P_{n,m}$.}
  \end{figure}

An $m$-diagonal $\alpha$ is a straight line
between two vertices on the border such that $\alpha$ divides the
polygon into two parts, each with number of sides congruent to $2$
modulo $m$. We denote by $\alpha_{i,j}$ the $m$-diagonal between
the vertex $i$ and $j$.

We say that a set $\Delta$ of $m$-diagonals do not cross if no two
diagonals in $\Delta$ cross at the interior of the polygon. A maximal
set of $m$-diagonals that do not cross consists of exactly $n-1$
diagonals, and they divide the polygon into $(m+2)$-gons. We call such
a set an $(m+2)$-angulation of $P_{n,m}$. See \cite{bm2}. We
note that if $m=1$, a maximal set of diagonals is a triangulation of
the regular polygon $P_{n,1}$, and it consists of $n-1$ diagonals. See
Figure \ref{figexampleangulations} for some examples.

  \begin{figure}[htp]
  \begin{center}
    \includegraphics[width=4.1cm]{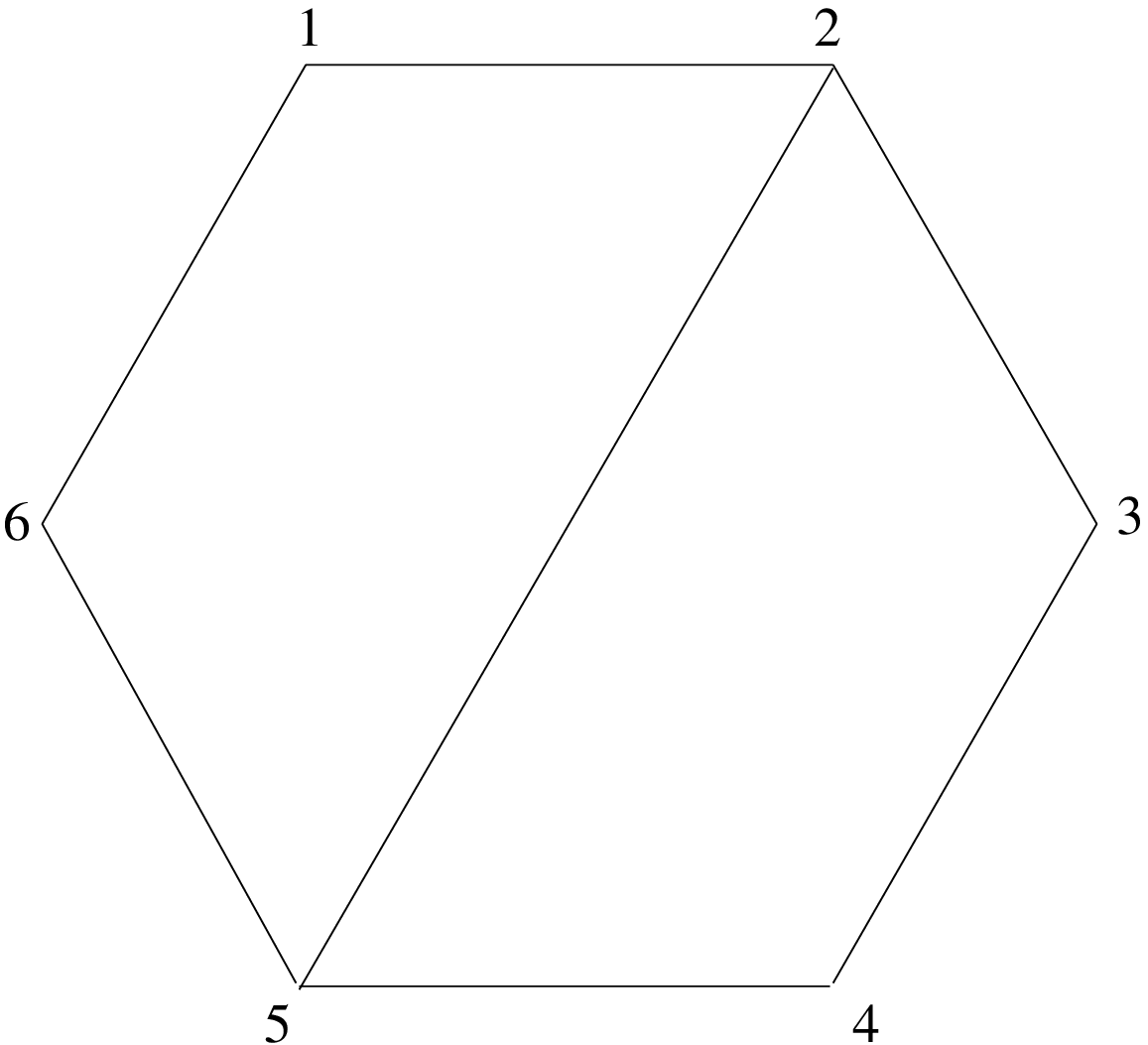}
    \includegraphics[width=3.75cm]{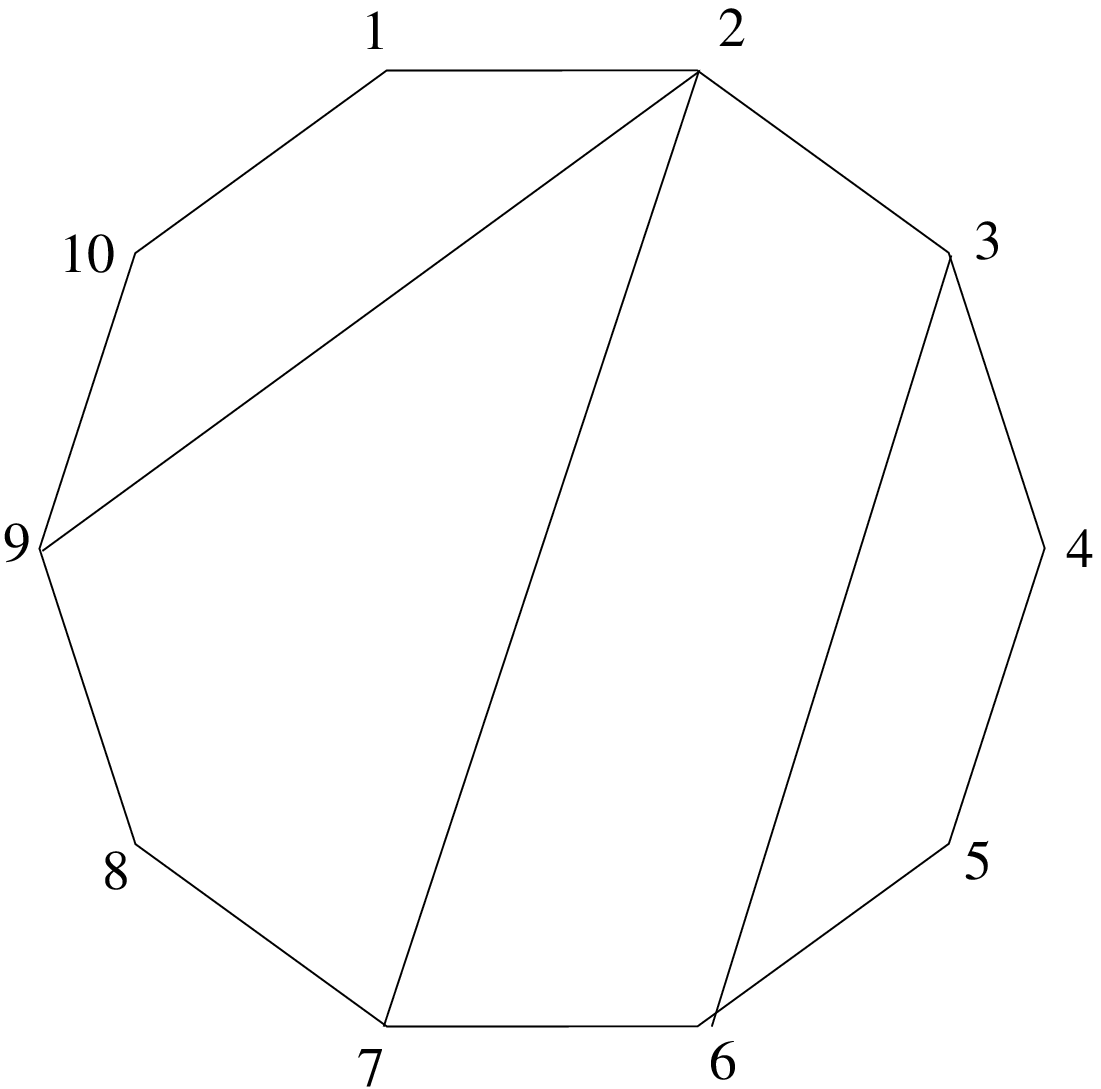}
  \end{center}\caption{\label{figexampleangulations}A $4$-angulation
      of $P_{2,2}$ and a $4$-angulation of $P_{4,2}$.}  
  \end{figure}

Given an $(m+2)$-angulation $\Delta$ of $P_{n,m}$, \cite{bt} define an 
operation on the diagonals. Let $\alpha$ be an $m$-diagonal in
$\Delta$, and consider the set of diagonals $(\Delta \backslash
\alpha)$. By removing $\alpha$, we obtain a $(2m+2)$-gon, and there
are $m+1$ possibilities to put it back to obtain a new
$(m+2)$-angulation. We call $\alpha$ a diameter of the inner
$(2m+2)$-gon, because it geometrically connects two opposite vertices
in the $(2m+2)$-gon. If we replace $\alpha$ with another diagonal, say
$\beta$, then $\beta$ is also a diameter of the $(2m+2)$-gon, and
hence all possibilities to replace $\alpha$ with another diagonal
correspond to rotating the $(2m+2)$-gon. The operation is defined as
follows. For any $m$-diagonal $\alpha$, define the mutation at
$\alpha$ to be the new $(m+2)$-angulation obtained by rotating the
$2m+2$-gon corresponding to $\alpha$ clockwise. See Figure
\ref{figexamplemutations}.

  \begin{figure}[htp]
  \begin{center}
    \includegraphics[width=9cm]{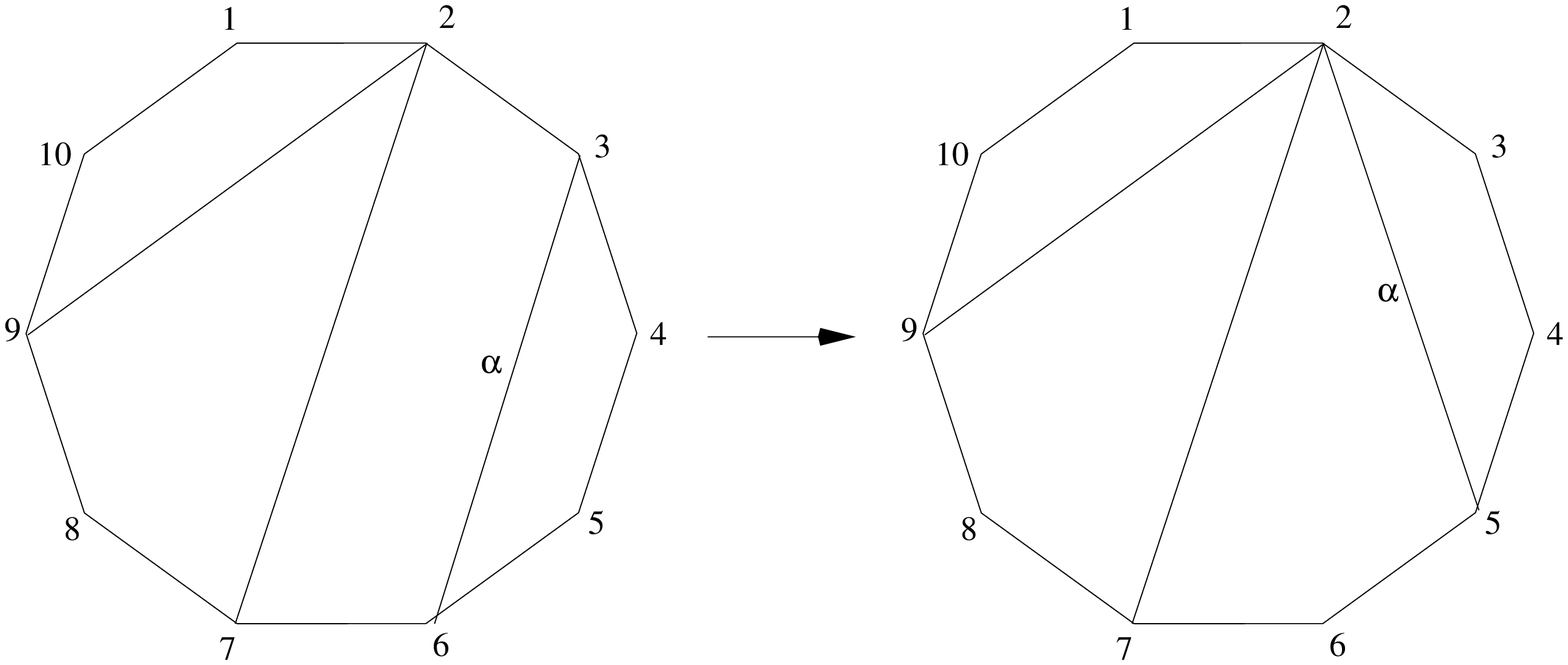}
  \end{center}\caption{\label{figexamplemutations}Mutation at the
    diagonal $\alpha$.}  
  \end{figure}

For an $(m+2)$-angulation $\Delta$ of $P_{n,m}$, we can define a
coloured quiver with $n-1$ vertices in the following way (see
\cite{bt}). The vertices are the diagonals in $\Delta$ and there is an
arrow from $\alpha$ to $\beta$ if they both lie in some $(m+2)$-gon in
$\Delta$. The colour of the arrow is the number of edges forming the
segment of the boundary of the $(m+2)$-gon which lies between $\alpha$
and $\beta$, counterclockwise from $\alpha$. See Proposition 11.1 in
\cite{bt}. We write $Q_{\Delta}$ for the coloured quiver obtained from
$\Delta$.

It is known that the quiver obtained in this way is a coloured quiver
of type $A_{n-1}$. It is also known that all quivers of this type can
be obtained by such an $(m+2)$-angulation. Also, quiver mutation
commutes with mutation at diagonals. Figure
\ref{figangulationandquiver} shows three $4$-angulations of $P_{4,2}$
and the corresponding coloured quivers. The Figure also illustrates
mutation at the middle diagonal.  

  \begin{figure}[htp]
  \begin{center}
    \includegraphics[width=12.6cm]{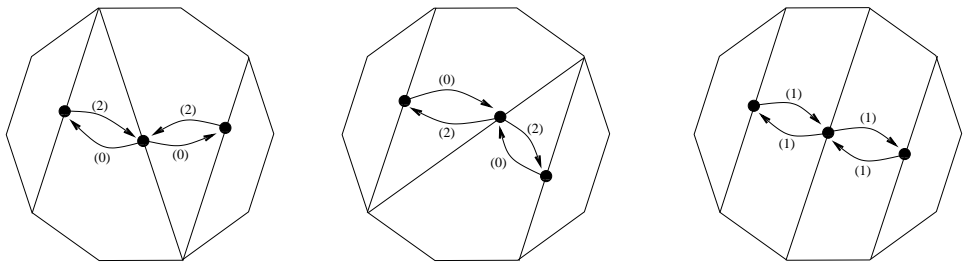}
  \end{center}\caption{\label{figangulationandquiver} Examples of 
    $4$-angulations of $P_{4,2}$ and the corresponding quivers.}  
  \end{figure}

Let $\alpha$ be an $m$-diagonal. Denote by $r \alpha$ the $m$-diagonal
obtained by rotating the polygon one step in the counterclockwise
direction. Also we let $r \Delta$ be the $(m+2)$-angulation obtained
by applying $r$ on each diagonal in $\Delta$. It is clear that $r$
preserves the corresponding coloured quiver. 

Baur and Marsh define in \cite{bm2} a category $\mathcal{C}_{n,m}$ of
diagonals. This is a generalization of \cite{ccs}, where they consider
the case $m=1$. The indecomposable objects are the $m$-diagonals in
this additive category. The morphisms are generated by certain
elementary moves of $m$-diagonals modulo mesh-relations. There is an
elementary move $\alpha_{i,j} \rightarrow \alpha_{i,k}$ if both are
$m$-diagonals such that they form an $(m+2)$-gon with the boundary, and
we can obtain $\alpha_{i,k}$ by rotating $\alpha_{i,j}$ clockwise
about the vertex $i$. Let $\alpha$ be an $m$-diagonal, and consider
all $m$-diagonals $\beta_1, \beta_2,...,\beta_t$ with an elementary
move to $\alpha$, say $f_i: \beta_i \rightarrow \alpha$. Then there
exist elementary moves $g_i:r^m \alpha \rightarrow \beta_i$. The
mesh-relation at $\alpha$ is defined to be $$\sum_{i=1}^{t}f_i g_i.$$ 

Baur and Marsh show that this category is equivalent to the
$m$-cluster category of Dynkin type $A_{n-1}$. They also define a quiver
associated to the category, where the vertices are the $m$-diagonals and
the arrows are the elementary moves. This quiver is isomorphic to the
AR-quiver of the $m$-cluster category. The AR-translation is denoted
by $\tau$, and it is in fact equal to $r^m$. See Figure
\ref{figarquiverex} for the AR-quiver of $\mathcal{C}_{4,2}$. 

  \begin{figure}[htp]
  \begin{center}
    \includegraphics[width=12.6cm]{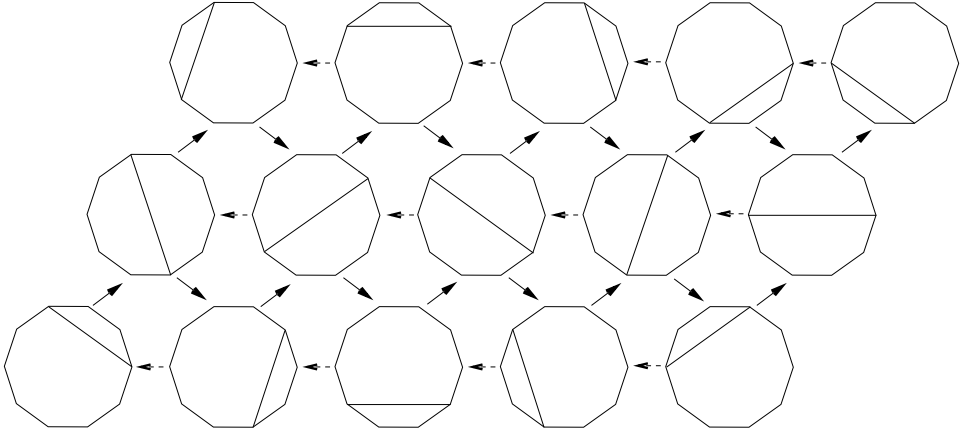}
  \end{center}\caption{\label{figarquiverex} AR-quiver of
    $\mathcal{C}_{4,2}$.}   
  \end{figure}

Furthermore, we have that if $\Delta$ is an $(m+2)$-angulation, then
$\Delta$, as a direct sum of its $m$-diagonals, is a basic
$m$-cluster-tilting object. Its endomorphism ring has quiver
$Q_{\Delta}$.

\section{Bijection between the mutation class of coloured quivers of
  type $A_{n-1}$ and $m$-angulations of polygons}

In this section we want to generalize the results in \cite{to1}. The
proof of the main theorem uses the same ideas as in
\cite{to1}. However, the situation is more complicated, and there are
more cases to consider. We prepare for the main theorem with a series
of lemmas.

Let $\mathcal{T}_{n,m}$ be the set of $(m+2)$-angulations of
$P_{n,m}$, and denote by $\mathcal{M}_{n,m}$ the mutation class of
coloured quivers of type $A_{n}$. From the previous section we have a 
surjective map $$\sigma_{n,m}: \mathcal{T}_{n,m} \rightarrow
\mathcal{M}_{n-1,m},$$ where $\sigma_{n,m}(\Delta) = Q_{\Delta}$. This
function is not injective, since for example $\sigma_{n,m}(\Delta) =
\sigma_{n,m}(r\Delta)$.   

In the same way as in \cite{to1} we define an equivalence relation on
$\mathcal{T}_{n,m}$ by letting $\Delta \sim \Delta'$ if $\Delta' =
r^i \Delta$ for some $i$. Recall that $r^m = \tau$. In \cite{to2} it
was observed that if $T$ is an $m$-cluster tilting object in
$\mathcal{C}_H^m$, then $Q_T$ is isomorphic to $Q_{T[i]}$ for all
$i \in \mathbb{Z}$. Since $[i]$ is an equivalence on the $m$-cluster
category, we have that $\End_{\mathcal{C}}(T)
\simeq \End_{\mathcal{C}}(T[i])$ for all $i \in
\mathbb{Z}$. Considering the category of $m$-diagonals, we have the
following proposition. The proof is left to the reader, and it is
straightforward from the AR-quiver.

\begin{prop} Let $\mathcal{C}_{n,m}$ be the category of $m$-diagonals
  of $P_{n,m}$, then $r = [1]$. 
\end{prop}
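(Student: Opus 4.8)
The statement to prove is that in the category $\mathcal{C}_{n,m}$ of $m$-diagonals of $P_{n,m}$, the rotation functor $r$ (one step counterclockwise) coincides with the shift functor $[1]$.

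The plan is to verify the identification on the level of indecomposable objects and then confirm compatibility with the morphism structure. Since $\mathcal{C}_{n,m}$ is equivalent to the $m$-cluster category of Dynkin type $A_{n-1}$, and the latter has shift $[1]$ and AR-translate $\tau$, I would begin from the relation established in the previous section, namely that the AR-translate on $\mathcal{C}_{n,m}$ is realized geometrically as $\tau = r^m$. In any $m$-cluster category the AR-translate satisfies $\tau = [1] \circ [1]^{-1}\tau$, but more usefully the defining orbit category $\mathcal{D}^b(H)/\tau^{-1}[m]$ forces the relation $\tau \simeq [m]$ as functors on $\mathcal{C}_H^m$ (since $\tau^{-1}[m]$ is identified with the identity). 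Combining $\tau = r^m$ with $\tau = [m]$ gives $r^m = [m]$, so $r$ and $[1]$ agree after taking $m$-th powers. This reduces the problem to showing that $r$ and $[1]$ act identically, not merely that their $m$-th powers do.

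The cleanest approach, which matches the author's remark that the proof is straightforward from the AR-quiver, is to trace both functors explicitly on indecomposable objects. First I would fix the standard labelling of the $m$-diagonals $\alpha_{i,j}$ and read off from the AR-quiver (as in Figure \ref{figarquiverex}) how $[1]$ permutes the vertices; the indecomposable objects of $\mathcal{C}_H^m$ are $X[i]$ with $0 \leq i < m$ together with $P[m]$, and the shift $[1]$ sends $X[i] \mapsto X[i+1]$, wrapping the top layer back via the projectives. Then I would compute the effect of $r$ on $\alpha_{i,j}$, namely $r\alpha_{i,j} = \alpha_{i-1,j-1}$ (indices modulo $nm+2$), and match this against the vertex permutation induced by $[1]$ on the AR-quiver. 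Verifying that these two permutations of the indecomposables coincide, and that $r$ sends elementary moves to elementary moves compatibly with how $[1]$ acts on morphisms, establishes $r = [1]$ as functors.

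The main obstacle will be pinning down the shift $[1]$ precisely enough on the geometric model to compare it with $r$ vertex-by-vertex: the identification of $\mathcal{C}_{n,m}$ with $\mathcal{C}_H^m$ is given abstractly, so one must transport the functor $[1]$ through that equivalence and express it in terms of the combinatorics of $m$-diagonals. Once the bijection $r^m = \tau = [m]$ is in hand, the remaining work is to show $r$ and $[1]$ are not merely two $m$-th roots of the same functor but literally equal; this follows because both are determined by their action on the AR-quiver as graph automorphisms commuting with $\tau$, and a direct inspection of the mesh structure shows there is a unique such automorphism carrying $\alpha_{i,j}$ to the correct shifted diagonal. I would therefore conclude by checking on one generating arrow (an elementary move) that $r$ and $[1]$ agree, which by functoriality and the mesh relations propagates to all of $\mathcal{C}_{n,m}$.
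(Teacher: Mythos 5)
Your approach---observing $r^m=\tau\simeq[m]$ from the orbit-category relation and then verifying $r=[1]$ by tracing both functors on the indecomposables and elementary moves of the AR-quiver---is exactly what the paper intends, since the author gives no argument beyond stating that the proof is left to the reader and is straightforward from the AR-quiver. You also correctly flag that $r^m=[m]$ alone does not pin down $r=[1]$ and that the vertex-by-vertex comparison on the AR-quiver (transported through the Baur--Marsh equivalence) is the substantive step.
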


We want to show that the function 

$$\widetilde{\sigma}_{n,m}: (\mathcal{T}_{n,m}/\!\!\sim) \rightarrow
\mathcal{M}_{n-1,m},$$
induced from $\sigma_{n,m}$, is bijective. For $m=1$ this is already
known for all $n$.

We say that an $m$-diagonal $\alpha$ in an $(m+2)$-angulation $\Delta$
is close to the border in $\Delta$ if $\alpha$ lies in an $(m+2)$-gon
together with edges only on the border. If $\Delta$ is an
$(m+2)$-angulation and $\alpha$ is close to the border, we define
$\Delta / \alpha$ to be the $(m+2)$-angulation of $P_{n-1,m}$ obtained
from $\Delta$ by letting $\alpha$ be a border edge and leaving all
other diagonals unchanged. See Figure \ref{figfactoringout}. We say
that we factor out $\alpha$. 

For a diagonal $\alpha$ in some $(m+2)$-angulation $\Delta$,
we always denote by $v_{\alpha}$ the corresponding vertex in
$Q_{\Delta}$. We generalize the proofs in \cite{to1} to obtain the
following lemmas.

\begin{lem}\label{lemcommutes}
If $\Delta$ is an $(m+2)$-angulation and $\alpha$ close to the border
in $\Delta$, then $Q_{\Delta}/v_{\alpha}$ is
connected. Furthermore, factoring out $v_{\alpha}$ corresponds to
factoring out $\alpha$ in $\Delta$, i.e. $Q_{\Delta / \alpha} =
Q_{\Delta}/ v_{\alpha}$.  
\end{lem}
\begin{proof} Factoring out $\alpha$ in $\Delta$ corresponds to
  removing the 
  vertex $v_{\alpha}$ in $Q_{\Delta}$ and all arrows adjacent to
  $v_{\alpha}$. All the other $m$-diagonals remain unchanged, and
  hence all arrows between all other vertices remain unchanged. It
  follows that $Q_{\Delta / \alpha} = Q_{\Delta}/ v_{\alpha}$. Then it
  is also easy to see that the resulting quiver is connected, since we
  obtain a new $(m+2)$-angulation.   
\end{proof}

\begin{lem}\label{lemconnected} If $\Delta$ is an $(m+2)$-angulation
  and $Q_{\Delta}$ the corresponding coloured quiver, then the
  coloured quiver $Q_{\Delta}/v_{\alpha}$, obtained from $Q_{\Delta}$
  by factoring out a vertex $v_{\alpha}$, is connected if and only if
  the corresponding $m$-diagonal $\alpha$ is close to the border.    
\end{lem}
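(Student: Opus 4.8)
The plan is to observe that one implication is already contained in Lemma~\ref{lemcommutes}: if $\alpha$ is close to the border, then $Q_\Delta/v_\alpha = Q_{\Delta/\alpha}$ is the quiver of an $(m+2)$-angulation of $P_{n-1,m}$, and is therefore connected. It thus remains to prove the converse, which I would establish by contraposition: assuming $\alpha$ is \emph{not} close to the border, I will show that $Q_\Delta/v_\alpha$ is disconnected.

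The key geometric input is that the diagonal $\alpha$ cuts $P_{n,m}$ into two sub-polygons $L$ and $R$, each of which is itself $(m+2)$-angulated by the diagonals of $\Delta$ lying inside it, since none of these crosses $\alpha$ by the non-crossing hypothesis. Write $C_L$ and $C_R$ for the two cells of $\Delta$ having $\alpha$ as an edge, lying in $L$ and $R$ respectively. First I would record the translation of the hypothesis: $\alpha$ is close to the border on the $L$-side precisely when $L$ consists of the single cell $C_L$, that is, when $L$ contains no diagonal of $\Delta$ other than $\alpha$ itself, and symmetrically for $R$. Hence $\alpha$ not being close to the border means that both $L$ and $R$ contain at least one diagonal of $\Delta$.

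Next I would analyse the adjacencies in $Q_\Delta$. By the construction recalled in the previous section, two vertices of $Q_\Delta$ are joined by an arrow exactly when the corresponding diagonals lie in a common $(m+2)$-gon of $\Delta$. Since every cell of $\Delta$ other than $C_L$ and $C_R$ lies entirely in $L$ or entirely in $R$, and the diagonals bounding $C_L$ (respectively $C_R$) other than $\alpha$ lie in $L$ (respectively $R$), no cell contains both a diagonal of $L$ and a diagonal of $R$. Consequently, in $Q_\Delta$ every arrow joining an $L$-diagonal to an $R$-diagonal must have $v_\alpha$ as one endpoint, so that $v_\alpha$ is the unique bridge between the $L$-part and the $R$-part. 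Deleting $v_\alpha$ therefore leaves no edge between these two sets of vertices, and since both are non-empty the quiver $Q_\Delta/v_\alpha$ splits, which gives disconnectedness.

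The main obstacle I expect is the geometric bookkeeping underlying the adjacency analysis: one must argue carefully, using only the non-crossing property and the fact that an $m$-diagonal separates the polygon into two parts, that every cell except the two incident to $\alpha$ lies strictly on one side, so that all adjacencies across the cut are forced through $v_\alpha$. Once this separation statement is in place, the combinatorial conclusion is immediate.
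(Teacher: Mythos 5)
Your proposal is correct and follows essentially the same route as the paper: one direction is delegated to Lemma~\ref{lemcommutes}, and the converse is proved by observing that the two sides of $\alpha$ each contain a diagonal and that no $(m+2)$-gon of $\Delta$ can contain diagonals from both sides without crossing $\alpha$, so $v_\alpha$ is the unique bridge. Your write-up is somewhat more detailed on the separation bookkeeping, but the underlying argument is identical.
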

\begin{proof} If $\alpha$ is not close to the border, $\alpha$ divides
  $\Delta$ into two parts $A$ and $B$ with at least one $m$-diagonal
  in each part. Let $\beta$ be an $m$-diagonal in $A$ and $\beta'$ an
  $m$-diagonal in $B$. If $\beta$ and $\beta'$ would determine a
  common $(m+2)$-gon in $\Delta$, the third $m$-diagonal would cross
  $\alpha$. Hence there is no path between the subquiver determined by
  $A$ and the subquiver determined by $B$, except those passing
  through $v_{\alpha}$.
  
  If $\alpha$ is close to the border, then, by Lemma
  \ref{lemcommutes}, factoring out $v_{\alpha}$ does not disconnect
  the quiver.  
\end{proof}

  \begin{figure}[htp]
  \begin{center}
    \includegraphics[width=7.5cm]{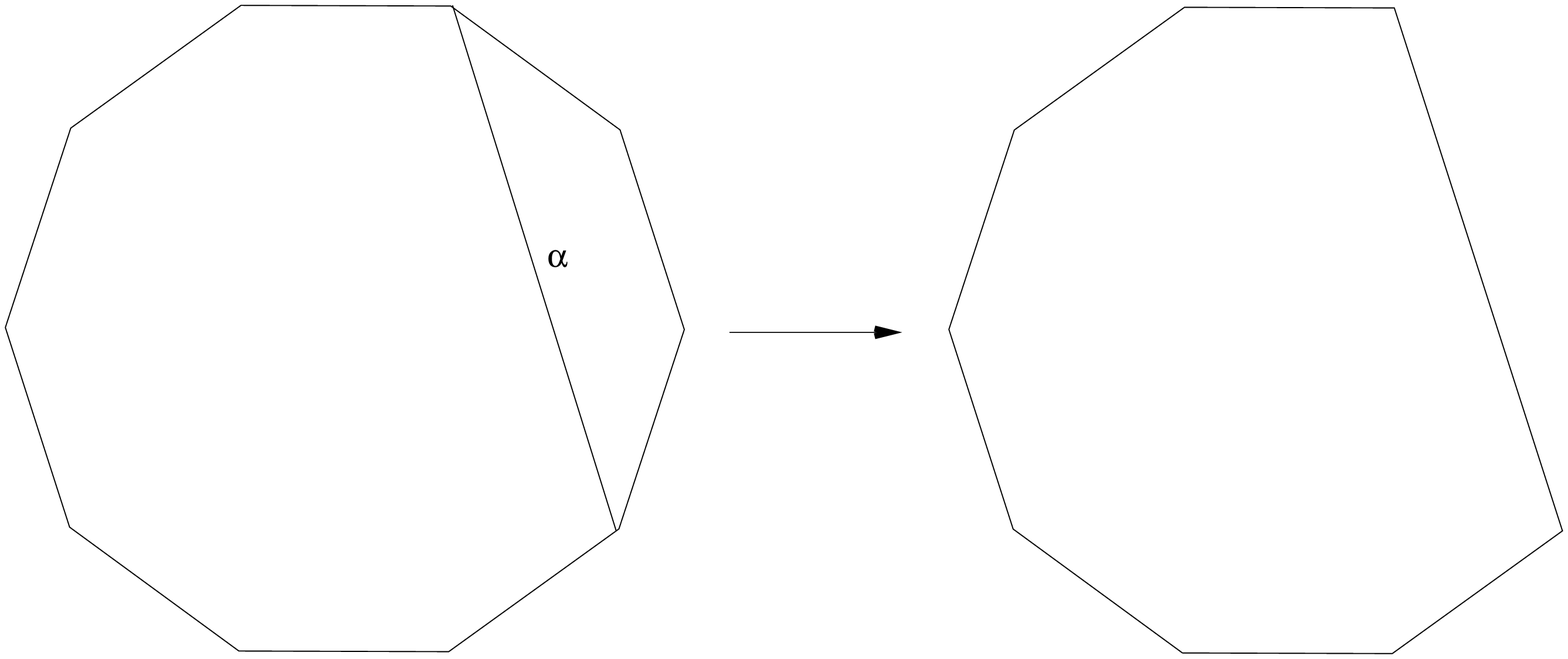}
  \end{center}\caption{\label{figfactoringout} Factoring out a
    diagonal close to the border, where $m=2$ and $n=4$.}  
  \end{figure}

We have the following proposition.

\begin{prop} Let $Q$ be a coloured quiver of an $m$-cluster-tilted
  algebra of type $A_{n-1}$. If $Q / v$ is the coloured quiver
  obtained from $Q$ by factoring out a vertex $v$ such that $Q / v$
  is connected, then $Q / v$ is a coloured quiver of an
  $m$-cluster-tilted algebra of type $A_{n-2}$.
\end{prop}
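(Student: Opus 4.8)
The plan is to connect the algebraic statement about coloured quivers back to the geometric picture of $(m+2)$-angulations, using the lemmas just established. First I would invoke the surjectivity of $\sigma_{n,m}$: since $Q$ is a coloured quiver of an $m$-cluster-tilted algebra of type $A_{n-1}$, there exists an $(m+2)$-angulation $\Delta$ of $P_{n,m}$ with $Q = Q_\Delta$, and the vertex $v$ corresponds to some $m$-diagonal $\alpha$ in $\Delta$, so that $v = v_\alpha$ and $Q/v = Q_\Delta/v_\alpha$. This translates the hypothesis and conclusion into the language where Lemmas \ref{lemcommutes} and \ref{lemconnected} apply.

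The key step is then to use Lemma \ref{lemconnected} in the contrapositive direction: the hypothesis that $Q/v = Q_\Delta/v_\alpha$ is connected forces $\alpha$ to be close to the border in $\Delta$. This is precisely what is needed to make the factoring-out operation $\Delta/\alpha$ well-defined. Once $\alpha$ is known to be close to the border, $\Delta/\alpha$ is an $(m+2)$-angulation of the smaller polygon $P_{n-1,m}$, obtained by promoting $\alpha$ to a border edge. By Lemma \ref{lemcommutes} we have $Q_{\Delta/\alpha} = Q_\Delta/v_\alpha = Q/v$, so the quiver $Q/v$ is realized geometrically as the coloured quiver of an $(m+2)$-angulation of $P_{n-1,m}$.

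To finish I would recall from the earlier section that any $(m+2)$-angulation $\Delta'$ of $P_{n-1,m}$ is, as a direct sum of its $m$-diagonals, a basic $m$-cluster-tilting object in $\mathcal{C}_{n-1,m}$, whose endomorphism ring has quiver $Q_{\Delta'}$, and that $\mathcal{C}_{n-1,m}$ is the $m$-cluster category of Dynkin type $A_{n-2}$. Applying this to $\Delta' = \Delta/\alpha$ shows that $Q/v = Q_{\Delta/\alpha}$ is the coloured quiver of an $m$-cluster-tilted algebra of type $A_{n-2}$, as required.

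I expect the only genuine obstacle to be making sure the geometric reduction does what is claimed at the combinatorial level, namely verifying that promoting a diagonal close to the border to a boundary edge indeed yields a legitimate $(m+2)$-angulation of $P_{n-1,m}$ rather than merely a set of non-crossing $m$-diagonals in the smaller polygon. Since $\alpha$ is close to the border, it cuts off a single $(m+2)$-gon all of whose other sides are boundary edges; removing this cell and re-labelling collapses exactly $m$ boundary edges into one, so an $(nm+2)$-gon becomes an $((n-1)m+2)$-gon and the remaining diagonals keep their congruence-mod-$m$ property. This bookkeeping is routine and is essentially already packaged in the definition of $\Delta/\alpha$ and in Lemma \ref{lemcommutes}, so the remaining work is light.
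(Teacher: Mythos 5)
Your proposal is correct and follows essentially the same route as the paper: use Lemma \ref{lemconnected} to conclude that the connectedness of $Q/v$ forces the corresponding $m$-diagonal $\alpha$ to be close to the border, then observe that $\Delta/\alpha$ is an $(m+2)$-angulation of $P_{n-1,m}$ realizing $Q/v$ via Lemma \ref{lemcommutes}. The paper's proof is just a terser version of the same argument; your additional bookkeeping about why $\Delta/\alpha$ is a legitimate $(m+2)$-angulation is a reasonable expansion of what the paper leaves implicit.
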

\begin{proof} The vertex $v$ corresponds to an $m$-diagonal $\alpha$
  close to the border of an $(m+2)$-angulation $\Delta$ of $P_{n,m}$,
  by Lemma \ref{lemconnected}. Since $\Delta / \alpha$ is an
  $(m+2)$-angulation of $P_{n-1,m}$, the claim follows. 
\end{proof}

Next we want to define an extension of an $(m+2)$-angulation $\Delta$
of $P_{n,m}$, by adding $m$ new vertices to the border of the polygon
and an $m$-diagonal $\alpha$ such that $\alpha$ is close to the border
and $\Delta \cup \alpha$ is an $(m+2)$-angulation of $P_{n+1,m}$. More 
precisely, consider any border edge $e$ between vertex $v_i$ and
$v_{i+1}$ on $P_{n,m}$, and extend the polygon with $m$ new vertices
between $v_i$ and $v_{i+1}$, making $e$ the new diagonal close to the
border in $P_{n+1,m}$. We denote by $\Delta(e)$ the new
$(m+2)$-angulation thus obtained. See Figure
\ref{figextendingate}. Obviously $\Delta(e)/e = \Delta$. Of course, we
can do this at any border edge in $\Delta$. 

  \begin{figure}[htp]
  \begin{center}
    \includegraphics[width=10cm]{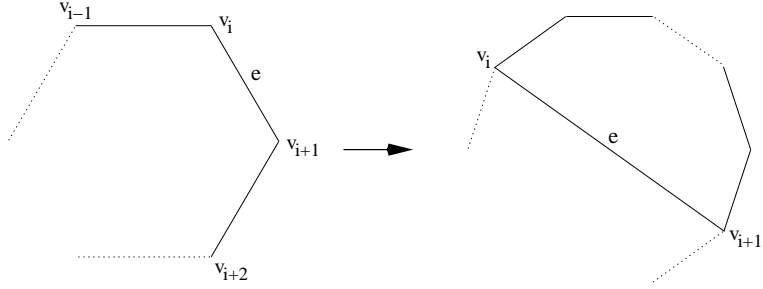}
  \end{center}\caption{\label{figextendingate} Extending an
    $(m+2)$-angulation of $P_{n,m}$ at the border edge $e$, making $e$
    a diagonal close to the border in an $(m+2)$-angulation $\Delta(e)$
    of $P_{n+1,m}$.}    
  \end{figure}

Let $\alpha$ be some $m$-diagonal in an $(m+2)$-angulation
$\Delta$. We say that an $m$-diagonal or border edge $e$ is adjacent
to $\alpha$ if $e$ and $\alpha$ both lie in some common $(m+2)$-gon in
$\Delta$. If $e$ is an $m$-diagonal, then there is some coloured arrow
from $v_e$ to $v_{\alpha}$ in $Q_{\Delta}$ if and only if $\alpha$ and
$e$ are adjacent. If $e$ is a border edge of the polygon, then
$Q_{\Delta(e)}$ has a coloured arrow from $v_e$ to $v_{\alpha}$ if and
only if $e$ and $\alpha$ are adjacent.  

  \begin{figure}[htp]
  \begin{center}
    \includegraphics[width=4cm]{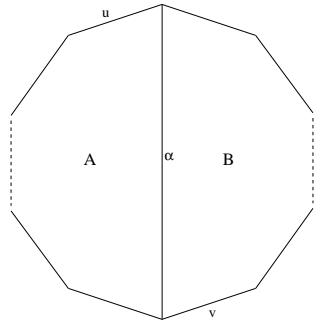}
  \end{center}\caption{\label{figadjacenttoalpha} Extending an
    $(m+2)$-angulation $\Delta$ of $P_{n,m}$ such that the vertex
    corresponding to the new diagonal in $Q_{\Delta(e)}$ is adjacent to
    the vertex corresponding to $\alpha$. See Lemma
    \ref{extensionnoniso}.} 
  \end{figure}

Suppose $\alpha$ is an $m$-diagonal in an $(m+2)$-angulation $\Delta$ of
$P_{n,m}$, and consider Figure \ref{figadjacenttoalpha}. Then $\alpha$
divides $\Delta$ into two parts $A$ and $B$. Since there can be
at most $(m+1)$ border edges adjacent to $\alpha$ in $A$ and $(m+1)$
border edges adjacent to $\alpha$ in $B$, there are at most $2(m+1)$
ways to extend $\Delta$ such that the vertex corresponding to the new
diagonal is adjacent to the vertex corresponding to $\alpha$. However,
by definiton of $Q_{\Delta}$, there is at most one border edge $e$ in
$A$ such that $Q_{\Delta(e)}$ has an arrow of a particular colour $c$
from $v_e$ to $v_{\alpha}$, and at most one border edge $e'$ in $B$
such that $v_{e'}$ has an arrow of colour $c$ to $v_{\alpha}$.

Before we prove the next lemma, recall that $r^i \Delta$ denotes the
rotation of $\Delta$ $i$ steps in the counterclockwise direction. In
the same way we write $r^i \alpha$, where $\alpha$ is an $m$-diagonal,
for the $m$-diagonal obtained from $\alpha$ by rotating the polygon
$i$ steps in the clockwise direction, i.e. $r^i \alpha =
\alpha[i]$. We extend this definition to a set of $m$-diagonals, so
let $A$ be a set of diagonals, then we define 
$$r^i A = \{r^i \alpha| \alpha \in A\}.$$ We denote by $Q_{\Delta_A}$ 
the subquiver of $Q_{\Delta}$ determined by the diagonals only in $A$,
i.e. the subquiver of $Q_{\Delta}$ obtained by factoring out all
vertices corresponding to $m$-diagonals not in $A$.

\begin{lem}\label{uniqueness}
  Suppose $\Delta$ is an $(m+2)$-angulation such that $Q_{\Delta'}
  \simeq Q_{\Delta}$ for some $(m+2)$-angulation $\Delta'$ implies
  $\Delta' = r^i \Delta$ for some integer $i$. 

  Let $\alpha$ be an $m$-diagonal in $\Delta$ that divides $\Delta$
  into two parts $A$ and $B$. Suppose there is an isomorphism
  $Q_{\Delta} \stackrel{\theta}{\rightarrow} Q_{\Delta'}$ that sends
  $v_{\alpha}$ to $v_{\alpha'}$. Then $\alpha'$ divides $\Delta'$ into
  two parts $A'$ and $B'$, such that $\alpha' = r^i \alpha$, $A' = r^i
  A$ and $B' = r^i B$ for some integer $i$.  
\end{lem}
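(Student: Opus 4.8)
The plan is to exploit the recursive structure of $(m+2)$-angulations by factoring out the diagonal $\alpha$ and using the hypothesis on $\Delta$. The key idea is that $\alpha$ separates $\Delta$ into two sub-angulations $A$ and $B$ (each living in a sub-polygon glued to $\alpha$), and the isomorphism $\theta$ must respect this separation. First I would observe that $v_{\alpha'} = \theta(v_\alpha)$ is a vertex whose removal behaves the same way as removal of $v_\alpha$: if $\alpha$ is close to the border then by Lemma~\ref{lemconnected} the quiver $Q_\Delta/v_\alpha$ is connected, so $Q_{\Delta'}/v_{\alpha'}$ is connected too, forcing $\alpha'$ to be close to the border in $\Delta'$; in that degenerate case one of $A,B$ is empty and the statement reduces almost immediately. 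So the substantive case is when $\alpha$ is genuinely separating, and I would treat that case in detail.

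In the separating case, removing $v_\alpha$ disconnects $Q_\Delta$ into the two subquivers $Q_{\Delta_A}$ and $Q_{\Delta_B}$ (by the argument in the proof of Lemma~\ref{lemconnected}, any path between them must pass through $v_\alpha$). Since $\theta$ is a quiver isomorphism, it carries connected components of $Q_\Delta/v_\alpha$ to connected components of $Q_{\Delta'}/v_{\alpha'}$. Thus $\theta$ sends $\{Q_{\Delta_A}, Q_{\Delta_B}\}$ to the two components of $Q_{\Delta'}/v_{\alpha'}$, which are exactly the subquivers $Q_{\Delta'_{A'}}$ and $Q_{\Delta'_{B'}}$ determined by the separation of $\Delta'$ at $\alpha'$. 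After possibly swapping the names $A',B'$, we get quiver isomorphisms $Q_{\Delta_A}\simeq Q_{\Delta'_{A'}}$ and $Q_{\Delta_B}\simeq Q_{\Delta'_{B'}}$, each compatible with $\theta$ and with the respective attachment to $v_\alpha$ and $v_{\alpha'}$.

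The next step is to upgrade these component isomorphisms to the desired rotational identities $\alpha' = r^i\alpha$, $A' = r^iA$, $B' = r^iB$. Here I would use the hypothesis in the first sentence of the lemma: any $(m+2)$-angulation $\Delta''$ with $Q_{\Delta''}\simeq Q_\Delta$ satisfies $\Delta'' = r^j\Delta$. The plan is to reconstruct a global angulation from the local data on the $A$-side and use this rigidity. Concretely, one completes $A$ (together with $\alpha$ sitting close to the border of the sub-polygon it bounds) to a genuine $(m+2)$-angulation whose quiver is isomorphic to $Q_\Delta$, and applies the hypothesis to conclude that the isomorphism $Q_{\Delta_A}\simeq Q_{\Delta'_{A'}}$ must be realized by a single rotation $r^i$; the same $i$ then governs $\alpha' = r^i\alpha$ because $\alpha$ is the distinguished boundary diagonal. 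Running the symmetric argument on $B$ forces the same $i$ (since $\alpha'$ is already pinned down), giving $B' = r^iB$.

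\textbf{The main obstacle} will be the reconstruction/rigidity step: showing that the abstract quiver isomorphism between the two $A$-side subquivers is \emph{geometric}, i.e. induced by an honest rotation of the polygon rather than some exotic quiver automorphism, and — crucially — that the rotation index $i$ matching the two sides is forced to coincide. The delicate point is that $\alpha$ sits on the shared boundary between the $A$-piece and the $B$-piece, and one must verify that the colours of arrows from the boundary-adjacent diagonals into $v_\alpha$ rigidify the gluing: by the discussion preceding the lemma, for each colour $c$ there is at most one border-edge (equivalently, at most one adjacent diagonal) on each side producing an arrow of colour $c$ to $v_\alpha$, so $\theta$ has no freedom in how it matches the two sides once $v_\alpha\mapsto v_{\alpha'}$ is fixed. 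Making this colour-bookkeeping precise, and ruling out the possibility that $A$ matches $B'$ and $B$ matches $A'$ with incompatible rotations (which is where the ``more cases to consider'' remark applies), is the heart of the proof.
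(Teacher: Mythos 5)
Your reduction — split $Q_{\Delta}/v_{\alpha}$ into its two connected components, match them under $\theta$ with the components of $Q_{\Delta'}/v_{\alpha'}$, and so obtain $Q_{\Delta_A}\simeq Q_{\Delta'_{A'}}$ and $Q_{\Delta_B}\simeq Q_{\Delta'_{B'}}$ — agrees with the paper's setup, as does your observation that the case where $A$ could be matched with $B'$ only arises when the two sides have the same number of diagonals (the paper's $s=t$ case, which it dispatches by uniqueness of the diameter). The problem is that the step you label ``the main obstacle'' is not an obstacle to be smoothed over later; it is the entire content of the lemma, and the mechanism you propose for it would not close the gap. The colour bookkeeping of arrows into $v_{\alpha}$ only constrains which diagonals of $A$ and $A'$ are adjacent to $\alpha$ and $\alpha'$ and with which colours; it says nothing about the diagonals deeper inside the two parts, and two parts with isomorphic quivers and identical adjacency data at $\alpha$ need not be rotations of each other. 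Likewise, ``completing $A$ to an angulation with quiver isomorphic to $Q_{\Delta}$'' and invoking the hypothesis only yields that the completed angulation equals $r^k\Delta$ for \emph{some} $k$; it does not force that rotation to carry the $A$-side to the $A$-side, since $r^k\Delta$ may realize the same set of diagonals with $\alpha'$ playing the role of a different diagonal of $\Delta$ that happens to cut off a part of the same size.

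The paper closes exactly this gap with a counting argument absent from your outline. Having first pinned down $\alpha'=r^i\alpha$ (because $\alpha$ and $\alpha'$ cut off parts with $s$ and $t$ diagonals, which fixes the chord up to rotation), it assumes $r^iA\neq A'$ and forms the hybrid angulation $\Delta''=(r^i\Delta - r^iA)\cup A'$, which still satisfies $Q_{\Delta''}\simeq Q_{\Delta}$. It then introduces the invariant $E_{\Delta,\alpha}$, the set of diagonals of $\Delta$ cutting off a part that is a rotated copy of $A$, and argues that $|E_{\Delta,\alpha}|>|E_{\Delta'',\alpha}|$, so $\Delta''$ cannot be a rotation of $\Delta$ — contradicting the standing hypothesis on $\Delta$ and forcing $A'=r^iA$ (and symmetrically $B'=r^iB$). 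Some such rigidity device is indispensable: without it, your argument establishes only abstract quiver isomorphisms between the parts, not the geometric identities the lemma asserts.
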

\begin{proof}
  Suppose there are $s$ $m$-diagonals in A and $t$ $m$-diagonals in B.  

  If $s=t$, the claim is clear, since $\Delta$ and $\Delta'$ are
  rotation equivalent, and there are no other $m$-diagonal in
  $\Delta$, except $\alpha$, that divides $\Delta$ into two parts with
  the same number of $m$-diagonals. 

  Suppose $s \neq t$. First we make a general definition. Let $\Delta$
  be an $(m+2)$-angulation and $\alpha$ an $m$-diagonal in $\Delta$
  that divides $\Delta$ into two parts $A$ and $B$.  
  Define $E_{\Delta,\alpha}$ to be the finite set of $m$-diagonals
  $\{\alpha_k\}$ in $\Delta$ that divide $\Delta$ into two parts $A_k$
  and $B_k$ such that there exist some integer $j$ with $r^j A_k = A$.
  The set $E_{\Delta,\alpha}$ is non-empty because $\alpha \in
  E_{\Delta,\alpha}$. Clearly $Q_{\Delta_{A_k}} \simeq
  Q_{\Delta_{A}}$. 
  
  Now, consider the situation in the Lemma. Clearly $\alpha'$ divides
  $\Delta'$ into two parts $A'$ and $B'$ such that $Q_{\Delta_A}$ is
  isomorphic to $Q_{\Delta'_{A'}}$ and $Q_{\Delta_B}$ is isomorphic to
  $Q_{\Delta'_{B'}}$. Also, we know that there exist an integer $i$
  such that $r^i \alpha = \alpha'$, since both $\alpha$ and $\alpha'$
  divides $\Delta$ and $\Delta'$ into two parts with $s$ and $t$
  $m$-diagonals. Now, suppose for a contradiction that $r^i A \neq
  A'$.   

  Construct a new $(m+2)$-angulation $\Delta''$ by setting $\Delta''=
  (r^i \Delta - r^i A) \cup A'$. Then $Q_{\Delta} \simeq
  Q_{\Delta''}$, since $Q_{\Delta_{A \cup \alpha}} \simeq
  Q_{\Delta_{A' \cup \alpha'}}$. But we have that $|E_{\Delta,\alpha}|
  > |E_{\Delta'',\alpha}|$ by definition of $\Delta''$, so $\Delta$ is
  not rotation equivalent to $\Delta''$. This is a contradiction, so
  hence $r^i A = A'$. Similarly with $B$ and $B'$.

  \end{proof}

\begin{lem}\label{extensionnoniso} 
  Let $\Delta$ be an $(m+2)$-angulation such that if $Q_{\Delta'}$ is
  isomorphic to $Q_{\Delta}$ for some $(m+2)$-angulation $\Delta'$,
  then $\Delta' = r^i \Delta$ for some integer $i$. 

  Let $\Delta(e)$ and $\Delta(e')$, with $e \neq e'$, be two
  extensions of $\Delta$. Then $\Delta(e)= r^i \Delta(e')$ for some
  integer $i$ if and only if $Q_{\Delta(e)}$ is isomorphic to
  $Q_{\Delta(e')}$. 

\end{lem}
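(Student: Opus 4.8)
The plan is to prove the nontrivial implication, since the other is immediate: if $\Delta(e) = r^i\Delta(e')$ then, because $r$ preserves the associated coloured quiver, $Q_{\Delta(e)} = Q_{r^i\Delta(e')} = Q_{\Delta(e')}$, so the two quivers are in particular isomorphic.

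For the converse I would start from an isomorphism $\theta\colon Q_{\Delta(e)}\to Q_{\Delta(e')}$ and first locate the image of the new diagonal. Since $e$ is close to the border in $\Delta(e)$, Lemma \ref{lemcommutes} gives that $Q_{\Delta(e)}/v_e$ is connected. Writing $v_\gamma=\theta(v_e)$, the map $\theta$ induces an isomorphism $Q_{\Delta(e)}/v_e\cong Q_{\Delta(e')}/v_\gamma$, so $Q_{\Delta(e')}/v_\gamma$ is connected and Lemma \ref{lemconnected} forces $\gamma$ to be close to the border in $\Delta(e')$. Factoring out these two border diagonals identifies the quotients with the quivers of the factored angulations, so $Q_\Delta = Q_{\Delta(e)/e}\cong Q_{\Delta(e')/\gamma}$. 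As $\Delta(e')/\gamma$ is an $(m+2)$-angulation of $P_{n,m}$, the rigidity hypothesis on $\Delta$ yields $\Delta(e')/\gamma = r^j\Delta$ for some $j$.

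Next I would reduce to comparing two extensions of the \emph{same} $\Delta$. Because factoring out a close-to-border diagonal is inverse to an extension, $\Delta(e')$ is the extension of $r^j\Delta$ at the border edge determined by $\gamma$; since the extension construction commutes with rotation of the polygon, $\Delta(e')$ is rotation-equivalent in $P_{n+1,m}$ to an extension $\Delta(f)$ of $\Delta$ at some border edge $f$, with the new diagonal of $\Delta(f)$ matched to $\gamma$. Using again that $r$ preserves quivers, $\theta$ becomes an isomorphism $Q_{\Delta(e)}\to Q_{\Delta(f)}$ sending $v_e$ to the new-diagonal vertex $v_f$, hence restricting to an automorphism $\bar\theta$ of $Q_\Delta = Q_{\Delta(e)/e}=Q_{\Delta(f)/f}$. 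Applying Lemma \ref{uniqueness} to $\Delta$ (which is rigid) for each diagonal, and using that a rotation of $P_{n,m}$ is pinned down by its effect on a single diagonal together with one of the two regions it cuts out, I would upgrade $\bar\theta$ to a genuine rotational symmetry: there is $\rho=r^k$ with $\rho\Delta=\Delta$ and $\bar\theta(v_\alpha)=v_{\rho\alpha}$ for every diagonal $\alpha$ of $\Delta$.

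Finally I would match the two extension edges using the reconstruction observation preceding Lemma \ref{uniqueness}: for a diagonal $\alpha$ adjacent to the new gon of $e$ (one exists once $n\ge 2$), the side of $\alpha$ on which $e$ lies, together with the colour of the arrow between $v_e$ and $v_\alpha$, determines that border edge uniquely. Transporting this datum through $\bar\theta=\rho_\ast$ and comparing with $f$ shows $f=\rho(e)$ as border edges of $\Delta$; since a symmetry of $\Delta$ carrying $e$ to $f$ extends to a rotation of $P_{n+1,m}$ carrying $\Delta(e)$ to $\Delta(f)$, we conclude $\Delta(e)=r^i\Delta(f)=r^{i'}\Delta(e')$. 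The main obstacle is the third step: reconciling the two different rotation scales on $P_{n,m}$ and $P_{n+1,m}$, and globalizing the purely set-level rotations produced by Lemma \ref{uniqueness} into one vertex-by-vertex symmetry $\rho$ of $\Delta$ (which appears to require iterating the lemma on the subregions). The degenerate cases $n=1$ and $\Delta$ with nontrivial rotational symmetry, where several close-to-border diagonals could play the role of $\gamma$, would have to be checked separately.
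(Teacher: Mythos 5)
Your first half matches the paper's argument: locate the image $v_\gamma=\theta(v_e)$ of the new diagonal, use Lemma \ref{lemconnected} to see it is close to the border, factor it out to get $Q_{\Delta}\simeq Q_{\Delta(e')/\gamma}$, and invoke the rigidity hypothesis to write $\Delta(e')/\gamma=r^j\Delta$. The divergence, and the genuine gap, is your third step. You want to produce a single rotation $\rho=r^k$ with $\rho\Delta=\Delta$ realizing the induced automorphism $\bar\theta$ of $Q_\Delta$ \emph{vertex by vertex}. Lemma \ref{uniqueness} does not give you this: applied to one diagonal $\alpha$ it produces an integer $i_\alpha$ with $\bar\theta(v_\alpha)=v_{r^{i_\alpha}\alpha}$ and $A'=r^{i_\alpha}A$, $B'=r^{i_\alpha}B$ only as \emph{sets} of diagonals; applied to each diagonal separately it produces a family of integers with no a priori reason to coincide, and no control over how $\bar\theta$ acts inside $A$. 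You flag this yourself as the main obstacle, and as written it is unresolved; the proposed ``iterating the lemma on the subregions'' is not a proof.

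The paper shows this globalization is unnecessary. Fix one diagonal $\alpha$ adjacent to the new $(m+2)$-gon of $e$, record the colour $c$ of the arrow $v_e\to v_\alpha$, and transport the triple $(v_e,v_\alpha,c)$ through $\theta$ to $(v_\beta,v_\gamma,c)$. A single application of Lemma \ref{uniqueness} to this one $\alpha$ aligns the two sides, $A'=r^iA$ and $B'=r^iB$, and then the observation preceding the lemma --- on each side of $\alpha$ there is \emph{at most one} border edge whose extension carries an arrow of colour $c$ to $v_\alpha$ --- pins down $\beta=r^ie$ when $\beta$ lies in $A'$. The one case your sketch does not isolate, and which the paper must treat separately, is when $\beta$ lands in $B'$: this forces $s=t$, i.e.\ $\alpha$ and $\gamma$ are diameters, and the conclusion still holds after replacing $i$ by $i+(mn+2)/2$. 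So the mechanism you propose in your final step is the right one, but it should be applied directly to the single diagonal $\alpha$ (making your step three superfluous), and the diameter case must be handled explicitly; until the globalization claim is either removed or proved, the argument is incomplete.
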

\begin{proof} If $\Delta(e) = r^i\Delta(e')$ for some integer $i$,
  then clearly $Q_{\Delta(e)}$ is isomorphic to $Q_{\Delta(e')}$.

  Suppose $Q_{\Delta(e)}$ is isomorphic to $Q_{\Delta(e')}$, and
  suppose that the new $m$-diagonal $e$ is adjacent to $\alpha$, say
  $v_e$ has an arrow of colour $c$ to $v_{\alpha}$ in
  $Q_{\Delta(e)}$. The diagonal $\alpha$ divides $\Delta$ into two  
  parts $A$ and $B$. Suppose there are $s$ $m$-diagonals in $A$ and
  $t$ $m$-diagonals in $B$. Without loss of generality we can assume
  that $e$ is a border edge in $A$. 


  Since $Q_{\Delta(e)}$ is isomorphic to $Q_{\Delta(e')}$, there exist
  some $v_{\beta}$ in $Q_{\Delta(e')}$ with an arrow of colour $c$ to
  some vertex $v_{\gamma}$, and such that there exist an isomorphism
  $Q_{\Delta(e)} \rightarrow Q_{\Delta(e')}$ that sends $v_{\alpha}$
  to $v_{\gamma}$ and $v_{e}$ to $v_{\beta}$. By Lemma
  \ref{lemconnected} we know that $\beta$ is close to the border,
  since $e$ is close to the border. Then clearly $$Q_{\Delta} \simeq
  Q_{\Delta(e)/e} \simeq Q_{\Delta(e)}/v_e \simeq
  Q_{\Delta(e')}/v_{\beta} \simeq Q_{\Delta(e')/\beta},$$   
  so hence, by assumption, $\Delta(e')/\beta = r^i \Delta$ for some
  integer $i$ with $\gamma = r^i \alpha$. We have that $\gamma$
  divides $\Delta(e')/\beta$ into two parts $A'$ and $B'$, and by
  Lemma \ref{uniqueness}, we have, say $A' = r^i A$ and $B' = r^i B$.  

  If $\beta$ is in $A'$, then $\beta = r^i e$, and we
  are finished, since $\Delta(e')/\beta = r^i (\Delta(e)/e)$. If
  not, then $s=t$, since $B' = r^j A$ for some integer $j$, and so
  hence $\alpha$ and $\gamma$ are both diameters in $P_{n,m}$,
  i.e. they connect two opposite vertices in the polygon. In any
  $(m+2)$-angulation, only one $m$-diagonal can divide the polygon
  into two parts with the same number of $m$-diagonals. But then $j =
  i+(mn+2)/2$, i.e. $\gamma = r^{i+(mn+2)/2} \alpha$ with $B' =
  r^{i+(mn+2)/2} A$ and $A' = r^{i+(mn+2)/2} B$.  

  There is only one way to extend $\Delta(e')/\beta$ in $B'$ such that
  the new vertex has an arrow of colour $c$ to
  $v_{\gamma}=v_{\alpha}$, so $\beta = r^{i+(mn+2)/2} e$. Consequently
  $r^{i+(mn+2)/2}\Delta(e) = \Delta(e'),$ and we are done.

\end{proof}

Now we can prove the following theorem by induction. 

\begin{thm} The function $$\widetilde{\sigma}_{n,m}:
  (\mathcal{T}_{n,m}/\!\!\sim) \rightarrow \mathcal{M}_{n-1,m}$$ is
  bijective for all $n$ and $m$. 
\end{thm}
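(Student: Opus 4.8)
The plan is to prove bijectivity by induction on $n$, with the surjectivity of $\widetilde{\sigma}_{n,m}$ coming for free from the surjectivity of $\sigma_{n,m}$ noted earlier, so that the real content is injectivity. Injectivity of $\widetilde{\sigma}_{n,m}$ is precisely the statement that $Q_{\Delta'} \simeq Q_{\Delta}$ forces $\Delta' = r^i\Delta$ for some $i$; this is exactly the rigidity hypothesis that Lemmas \ref{uniqueness} and \ref{extensionnoniso} take as input, so the induction is designed so that the inductive hypothesis supplies that hypothesis at the previous level. The base case $n=1$ (and $n=2$) is immediate: $P_{1,m}$ carries no $m$-diagonals, and $P_{2,m}$ carries a single diameter whose $m+1$ positions form one rotation orbit, so each $\sim$-class maps bijectively to the unique quiver.

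For the inductive step I assume the rigidity statement for all $(m+2)$-angulations of $P_{n,m}$ and prove it for $P_{n+1,m}$. Given an $(m+2)$-angulation $\hat\Delta$ of $P_{n+1,m}$, I first note that it must contain a diagonal $\alpha$ close to the border (an ``ear''), so that $\Delta := \hat\Delta/\alpha$ is an $(m+2)$-angulation of $P_{n,m}$ and $\hat\Delta = \Delta(e)$ for the border edge $e$ that $\alpha$ collapses to. Now suppose $Q_{\hat\Delta'} \stackrel{\theta}{\simeq} Q_{\hat\Delta}$. By Lemma \ref{lemcommutes} the quiver $Q_{\hat\Delta}/v_\alpha = Q_\Delta$ is connected, hence so is $Q_{\hat\Delta'}/\theta(v_\alpha)$; Lemma \ref{lemconnected} then forces $\theta(v_\alpha) = v_\beta$ for a diagonal $\beta$ close to the border in $\hat\Delta'$. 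Factoring out on both sides gives $Q_{\hat\Delta'/\beta} \simeq Q_{\hat\Delta/\alpha} = Q_\Delta$, and the inductive hypothesis applied to $\Delta$ yields $\hat\Delta'/\beta = r^i\Delta$ for some integer $i$.

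It remains to promote this equality of the factored angulations to an equality of $\hat\Delta$ and $\hat\Delta'$ up to rotation, and this is where the real work sits. Writing $\hat\Delta' = (\hat\Delta'/\beta)(e') = (r^i\Delta)(e')$ and applying $r^{-i}$, I obtain that $r^{-i}\hat\Delta'$ is an extension $\Delta(e'')$ of the \emph{same} base angulation $\Delta$ as $\hat\Delta = \Delta(e)$, and since $r$ preserves the coloured quiver we still have $Q_{\Delta(e'')} \simeq Q_{\Delta(e)}$. Because $\Delta$ satisfies the rigidity hypothesis by induction, Lemma \ref{extensionnoniso} applies and gives $\Delta(e) = r^j\Delta(e'')$ for some $j$ (the case $e''=e$ being trivial), whence $\hat\Delta = r^{j-i}\hat\Delta'$ and the two angulations are $\sim$-equivalent, completing the induction. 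The step I expect to be the genuine obstacle — and the reason the lemmas were arranged as they are — is this alignment: a priori $\hat\Delta$ and $\hat\Delta'$ are built over two different base angulations, whereas Lemma \ref{extensionnoniso} only compares two extensions of one fixed $\Delta$, so one must first use the inductive rigidity to rotate the second base onto the first before the extension lemma can be invoked. The only other point needing care is the existence of an ear $\alpha$ in every $(m+2)$-angulation, which is the $(m+2)$-gon analogue of the fact that every polygon triangulation has an ear.
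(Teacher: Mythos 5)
Your proposal is correct and follows essentially the same route as the paper: induction on $n$, factoring out a diagonal close to the border, applying the inductive hypothesis to identify the reduced angulations up to rotation, and then invoking Lemma \ref{extensionnoniso} to lift this back to the full angulations. The only difference is cosmetic --- you spell out the alignment step (rotating $\hat\Delta'/\beta$ onto $\Delta$ before comparing extensions) that the paper leaves implicit, and you start the induction at $n=1,2$ rather than $n=3$.
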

\begin{proof} We already know that the function is surjective. Let $m$
  be a positive integer and suppose $\widetilde{\sigma}_{n,m}(\Delta)
  = \widetilde{\sigma}_{n,m}(\Delta')$. We want to show that $\Delta =
  \Delta'$ in $(\mathcal{T}_{n,m}/\!\!\sim)$. It is straightforward to
  check that $$\widetilde{\sigma}_{3,m}: (\mathcal{T}_{3,m}/\!\!\sim)
  \rightarrow \mathcal{M}_{2,m}$$ is injective for all $m$. 

  Suppose that $$\widetilde{\sigma}_{n-1,m}:
  (\mathcal{T}_{n-1,m}/\!\!\sim) \rightarrow \mathcal{M}_{n-2,m}$$ is
  injective for all $m$. Let $\alpha$ be an $m$-diagonal close to the
  border in $\Delta$ with image $v_{\alpha}$ in $Q$, where $Q$ is a
  representative for $\widetilde{\sigma}_{n,m}(\Delta)$. Then the
  diagonal $\alpha'$ in $\Delta'$ corresponding to $v_{\alpha}$ in $Q$
  is also close to the border. We have that
  $\widetilde{\sigma}_{n-1,m}(\Delta) =
  \widetilde{\sigma}_{n-1,m}(\Delta')$, and so by hypothesis $\Delta /
  \alpha = \Delta' / \alpha'$ in $(\mathcal{T}_{n,m}/\!\!\sim)$ 

  We can obtain $\Delta$ and $\Delta'$ from $\Delta / \alpha$ and
  $\Delta' / \alpha'$ by extending the polygon at some border edge. By
  Lemma \ref{extensionnoniso} all possible extensions of
  $\Delta/\alpha$ and $\Delta' / \alpha'$ gives non-isomorphic
  quivers, unless $\Delta = \Delta'$ in
  $(\mathcal{T}_{n,m}/\!\!\sim)$, and that finishes the proof. 
\end{proof}

\begin{cor}\label{correspondence} The number of elements in the
  $m$-mutation class of coloured quivers of type $A_{n-1}$ is the
  number of $(m+2)$-angulations of $P_{n,m}$, where two
  $(m+2)$-angulations $\Delta$ and $\Delta'$ are equivalent if $\Delta'$
  can be obtained from $\Delta$ by rotation.  
\end{cor}

We see that this agrees with the results obtained in \cite{to1} for
$m=1$, where the number of quivers in the mutation class of $A_{n-1}$
was shown to be the number of triangulations of the disk with $n$
diagonals, i.e. the number of triangulations up to rotation of a
regular polygon with $n+2$ vertices.

\section{The number of quivers in the $m$-mutation class of $A$ and
  the cell-growth problem} 

In this section we want to give formulas for the number of quivers in
the $m$-mutation classes of coloured quivers of Dynkin type $A$. 

It is easy to compute the number of non-isomorphic indecomposable
objects in $\mathcal{C}_H^m$, and we just give the formula without
proof. The number of indecomposable objects in $\mathcal{C}_H^m$ of
Dynkin type $A_n$ is given by $$\frac{mn(n+1)+2n}{2}.$$ 

It is known from \cite{z} that there is a bijection between the set of
$m$-clusters as defined in \cite{fr} and the set of $m$-cluster tilting
objects. The $m$-cluster tilting objects are in $1-1$ correspondence
with $(m+2)$-angulations, but as we have seen two tilting objects may give
rise to the same cluster-tilted algebra and the same coloured
quiver. The number of $m$-cluster tilting objects is the number of
$(m+2)$-angulations, where two $(m+2)$-angulations are considered
distinct if they are rotations of each other. In the case $m=1$ this
is the number of triangulations of a polygon, and is known to be
the Catalan numbers. This also agrees with \cite{fz2}, which says that
the number of clusters of Dynkin type $A$ is the Catalan numbers. 

The number of $m$-clusters of type $A$ is known from \cite{fr}. These
numbers are called the Fuss-Catalan numbers of type $A$. Fuss proved
already in 1791 that the number of $(m+2)$-angulations of an
$(mn+2)$-gon is given by these numbers \cite{f}. See \cite{a} for a
historical reference and more details. 

The number of $(m+2)$-angulations of a polygon is related to the
cell-growth problem, which has been investigated by many authors. We
will use the results obtained in \cite{hpr} by Harary, Palmer and
Read in 1975. 

The cell-growth problem considers an ``animal'' that grows from a cell
with a certain shape, i.e number of sides. See Figure \ref{figanimals} for some
examples of such ``animals''. The problem considered in \cite{hpr} is to
count the number of such ``animals'' with a fixed number of
cells. They also consider ``animals'' where two are equivalent if one
can be brought into the other by rotations and reflections. 

  \begin{figure}[htp]
  \begin{center}
    \includegraphics[width=4cm]{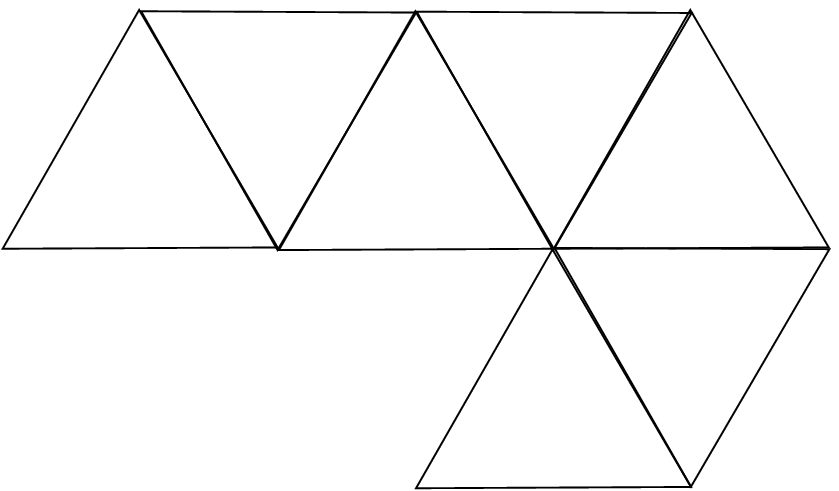}
    \includegraphics[width=3cm]{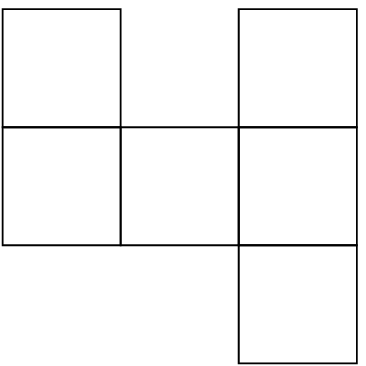}
    \includegraphics[width=4cm]{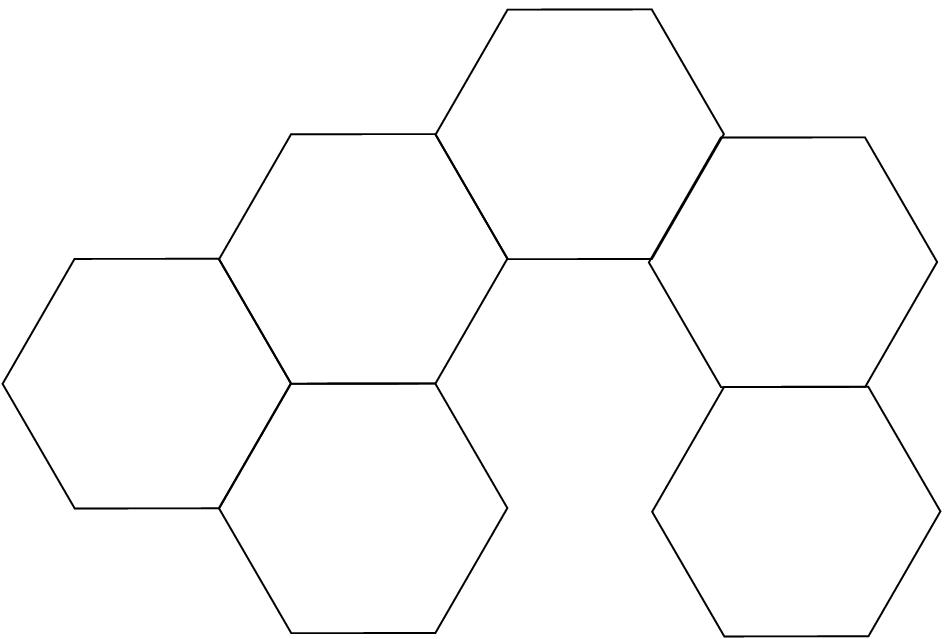}
  \end{center}\caption{\label{figanimals} Examples of ``animals'' with
    cells the shapes of polygons with 3,4, and 6 vertices.}  
  \end{figure}

In \cite{hpr} they consider ``animals'' that have a certain tree-like
structure. We refer to the paper for details. We can draw a node
inside each cell and an edge between two nodes if and only if they are
adjacent. This should result in a connected tree and not a graph. See
Figure \ref{figtreelikestructure}. 

  \begin{figure}[htp]
  \begin{center}
    \includegraphics[width=5cm]{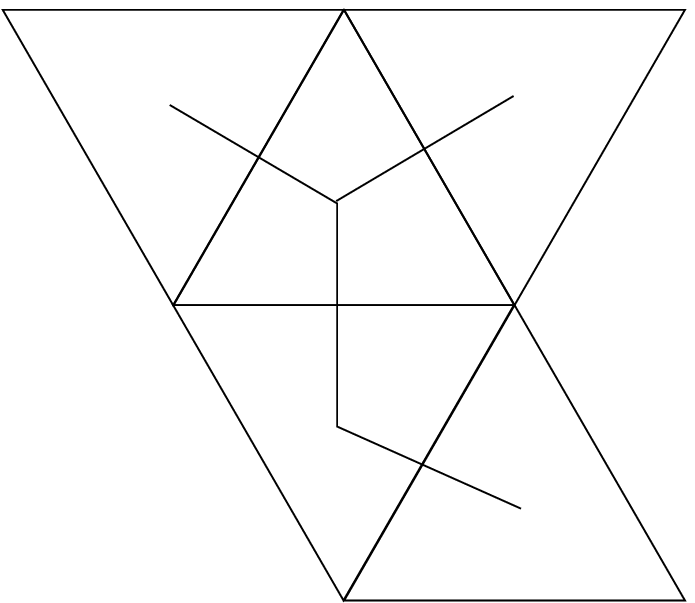}
    \includegraphics[width=5cm]{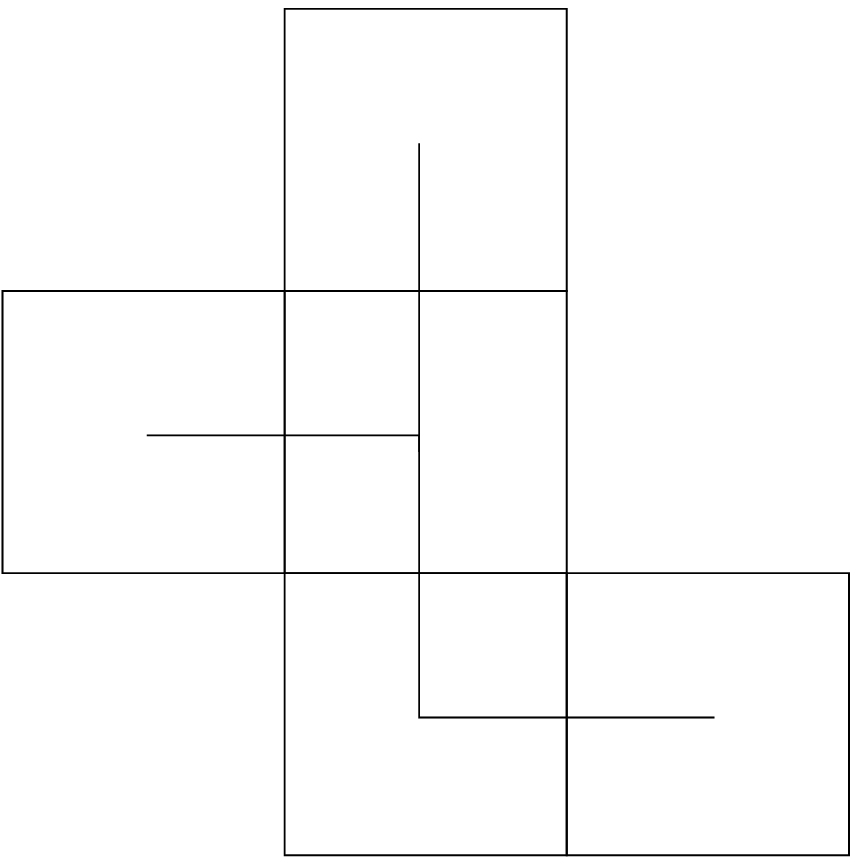}\newline

    \includegraphics[width=5cm]{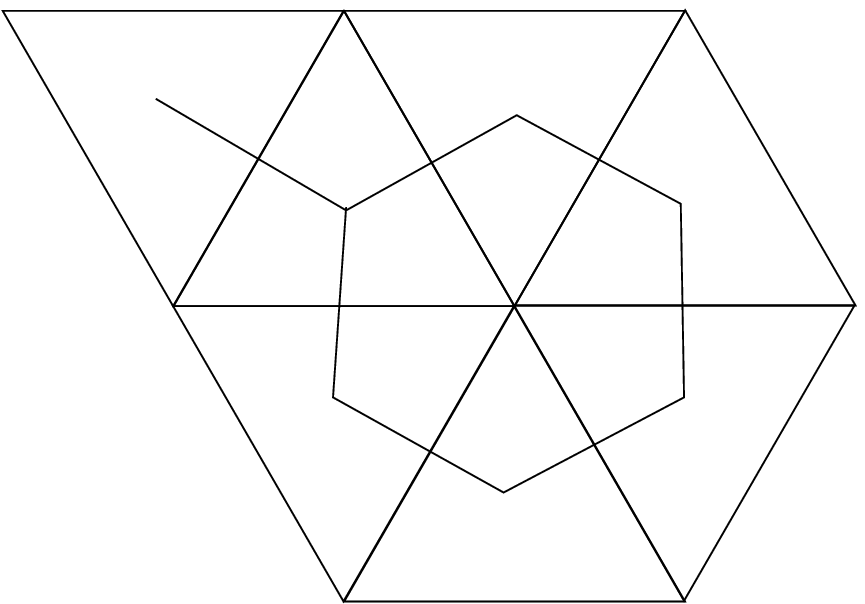}
    \includegraphics[width=5cm]{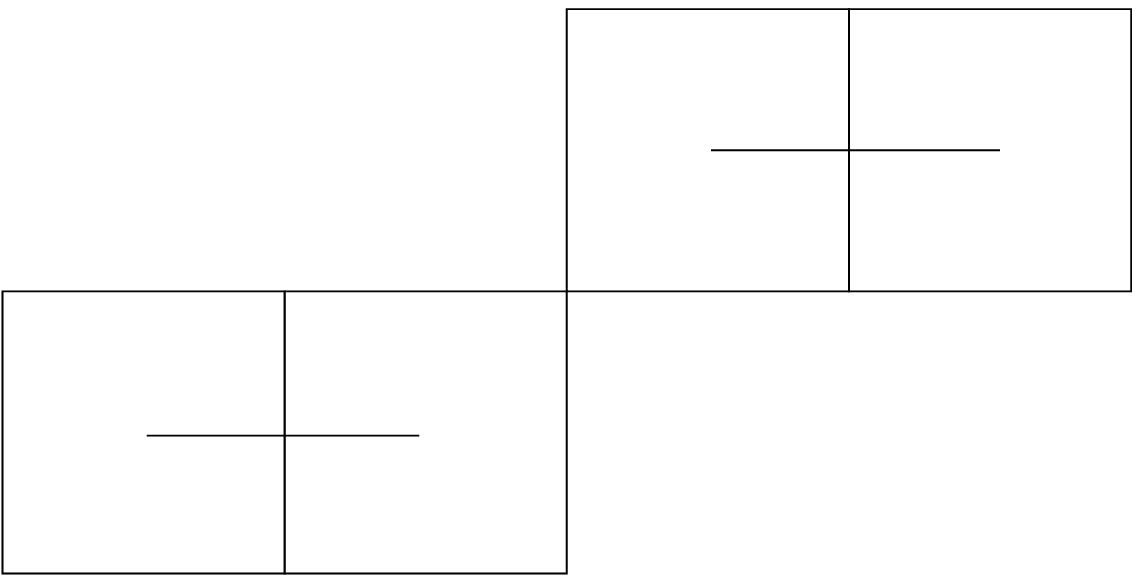}
  \end{center}\caption{\label{figtreelikestructure} The two first
    pictures are ``animals'' with allowed tree-like structure, while
    the third and fourth are not allowed.} 
  \end{figure}

The cells have the shape of a polygon with $s$ vertices, and an
``animal'' with allowed tree-like structure is called an
$s$-cluster. This is a rather fortunate name, because we will see that
such clusters or ``animals'' are in $1-1$ correspondence with
cluster-tilting objects, which again are in $1-1$ correspondence with
clusters in the cluster algebra sense. We will try not to confuse the
two different meanings in the rest of this paper.    

It is well-known that counting the number of $s$-clusters is the same
as counting the number of $s$-angulations of polygons. This follows
because of the tree-structure of the clusters. Given an $s$-angulation of
a polygon, we can construct an $s$-cluster, and given an $s$-cluster
we can construct an $s$-angulation. See Figure
\ref{figclusterangulation} for some examples. To construct a cluster,
we dissect the polygon into $s$-gons given by its $s$-angulation, and
we let two cells be adjacent if they were adjacent in the polygon. To
construct an $s$-angulation, we simply let the outer edges of the
cluster be the border of the polygon.  

  \begin{figure}[htp]
  \begin{center}
    \includegraphics[width=9cm]{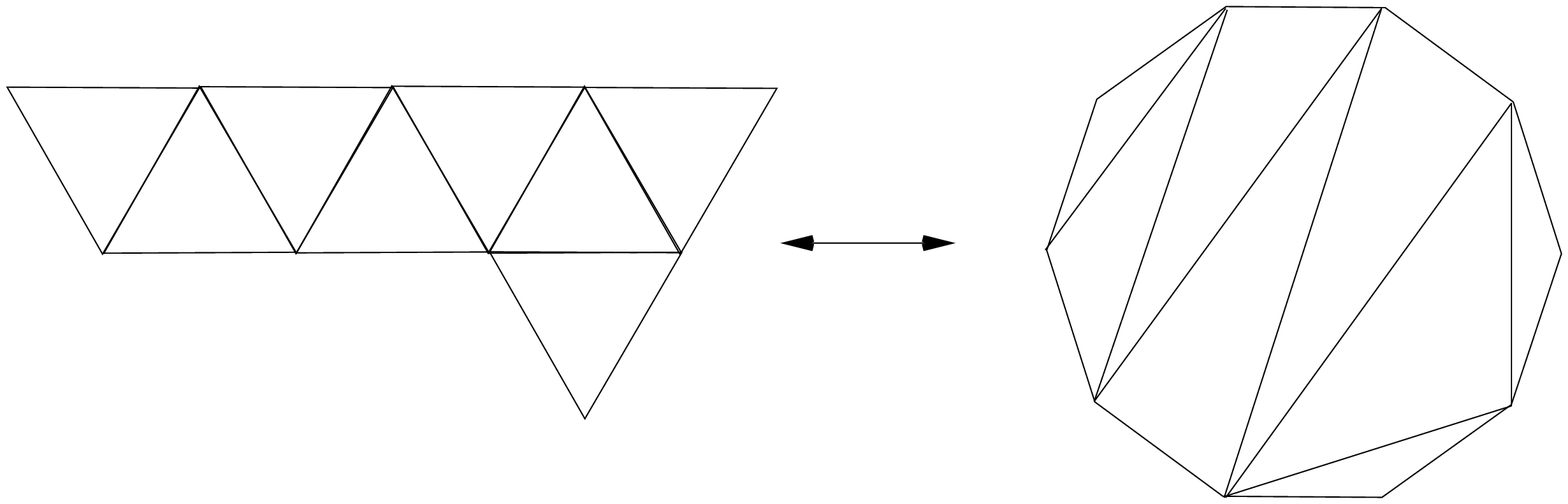}
    \includegraphics[width=9cm]{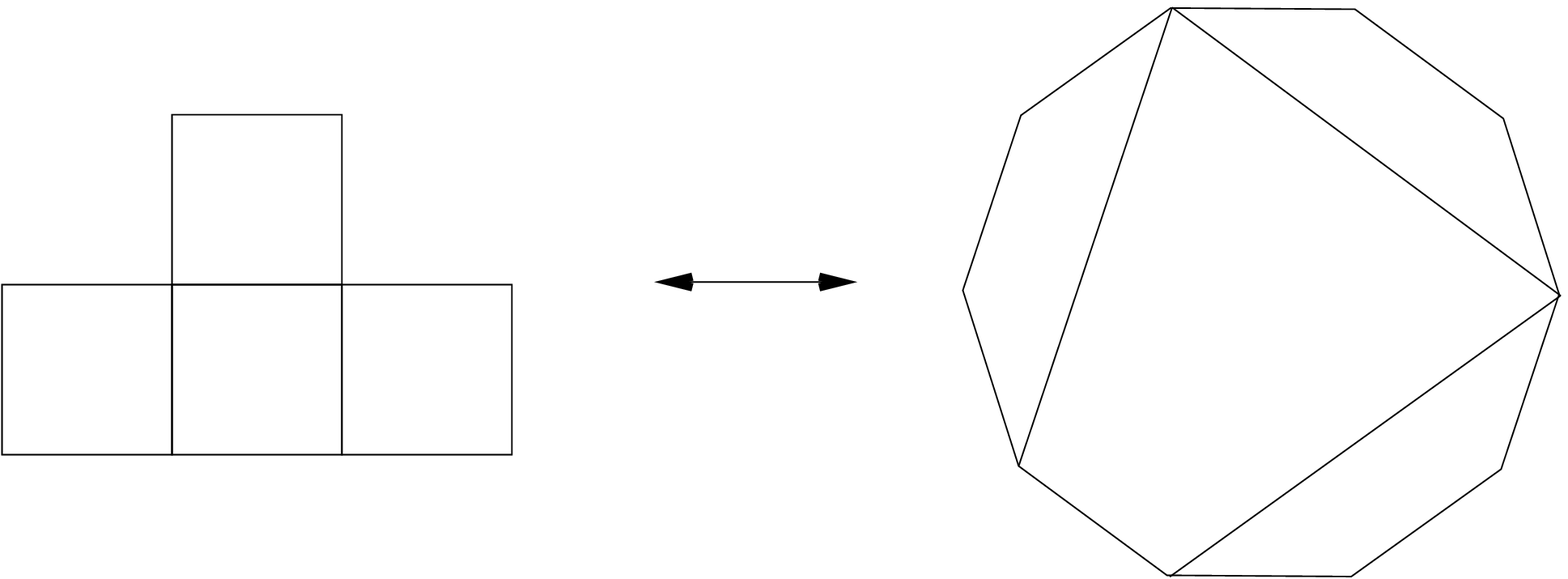}
    \includegraphics[width=9cm]{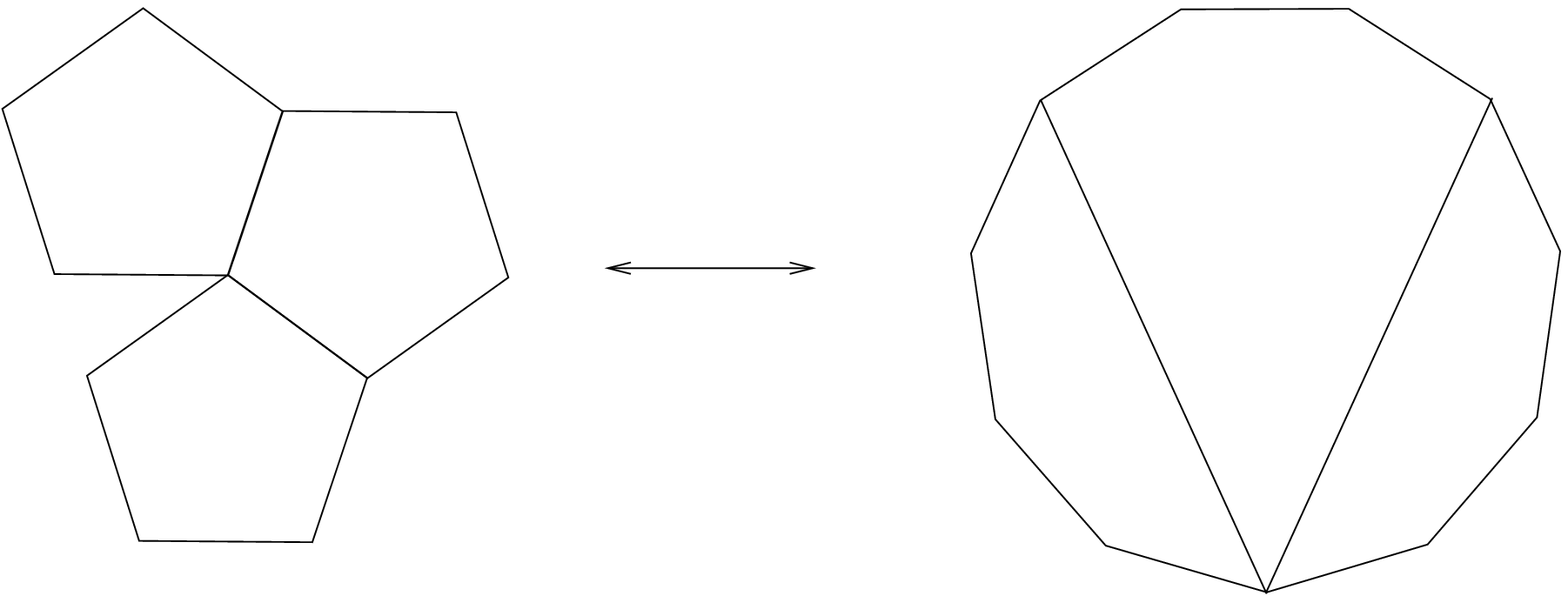}
  \end{center}\caption{\label{figclusterangulation} Correspondence between
    $s$-clusters and $s$-angulations.}  
  \end{figure}

First we consider the number of $(m+2)$-angulations of $P_{n,m}$, where
we fix the polygon in the plane and consider two $(m+2)$-angulations to
be distinct if one can be rotated into the other. We use the same
notation as in \cite{hpr}. Let

$$U_s(x) = \sum_{r=1}^{\infty} U_r^{(s)}x^r,$$
where $U_r^{(s)}$ denotes the number of $s$-clusters of $r$ cells,
rooted at an outer edge. We have the following equality \cite{hpr,p}.

$$U_s(x) = x(1+U_s(x))^{n-1}$$

By an application of Lagrange's inversion theorem they obtain that

$$U_r^{(s)} = \frac{1}{r}\binom{r(s-1)}{r-1}.$$
By the above this also gives the number of $s$-angulations of a
polygon with $r(s-2)+2$ vertices.

We obtain the following directly, and these are the Fuss-Catalan
numbers mentioned in the beginning of this section. Note that this is
a direct consequence of the correspondence in \cite{z} in combination
with the formula of \cite{fr}, see also \cite{a}. 

\begin{thm}[\cite{z,fr}]
The number of $m$-cluster tilting objects in the $m$-cluster category
$\mathcal{C}_H^m$ of type $A_{n}$ is given by 

$$U_{n+1}^{(m+2)} = \frac{1}{n+1}\binom{(n+1)(m+1)}{n}.$$
\end{thm}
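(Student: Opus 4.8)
The plan is to read the count off directly from the Harary--Palmer--Read formula $U_r^{(s)} = \frac{1}{r}\binom{r(s-1)}{r-1}$ displayed above, once the combinatorial parameters have been matched to the type $A_n$ situation. By the correspondences established in the previous sections, the $m$-cluster tilting objects in $\mathcal{C}_H^m$ of type $A_n$ are in bijection with the $(m+2)$-angulations of $P_{n+1,m}$, since the paper attaches $P_{n,m}$ to type $A_{n-1}$; moreover two angulations that are rotations of one another must here be counted as distinct. This last convention is precisely the ``rooted at an outer edge'' convention of \cite{hpr}: choosing a distinguished boundary edge fixes the polygon in the plane and breaks the rotational symmetry.

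First I would pin down the two parameters entering the formula. Since every cell of an $(m+2)$-angulation is an $(m+2)$-gon, we have $s = m+2$. To determine the number of cells $r$, I would count edge--cell incidences: $P_{n+1,m}$ is an $((n+1)m+2)$-gon carrying $n$ interior diagonals in any maximal non-crossing family, each interior diagonal being shared by two cells and each of the $(n+1)m+2$ boundary edges by one, so that
$$r(m+2) = ((n+1)m+2) + 2n = (n+1)(m+2),$$
whence $r = n+1$.

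With $s = m+2$ and $r = n+1$ in hand, the theorem is pure substitution:
$$U_{n+1}^{(m+2)} = \frac{1}{n+1}\binom{(n+1)(m+2-1)}{(n+1)-1} = \frac{1}{n+1}\binom{(n+1)(m+1)}{n},$$
which is the asserted Fuss--Catalan number. Alternatively one could bypass the closed form and apply Lagrange's inversion theorem to the functional equation for $U_s(x)$ recorded above, as in \cite{hpr}, but since $U_r^{(s)}$ is already in closed form this adds nothing.

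The step I expect to demand the most care is not any computation but the index bookkeeping: confirming that type $A_n$ corresponds to $P_{n+1,m}$ rather than $P_{n,m}$, that the number of cells is $n+1$ rather than $n$, and --- crucially --- that the enumeration here uses the ``rotations distinct'' convention appropriate to $m$-cluster tilting objects, as opposed to the ``rotations identified'' convention governing $\mathcal{M}_{n-1,m}$ in Corollary \ref{correspondence}. Once these identifications are fixed, the result follows immediately from the formula of \cite{hpr}.
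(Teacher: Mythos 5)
Your proposal is correct and follows essentially the same route as the paper: the paper's proof likewise reduces the statement to the Harary--Palmer--Read formula $U_r^{(s)}=\frac{1}{r}\binom{r(s-1)}{r-1}$ for clusters rooted at an outer edge, together with the bijection between $m$-cluster tilting objects and $(m+2)$-angulations counted with rotations distinct. You merely make explicit the parameter matching $s=m+2$, $r=n+1$ that the paper leaves as "the above discussion."
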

\begin{proof}
This follows from the above discussion and the fact that
$(m+2)$-angulations are in $1-1$ correspondence with $m$-cluster
tilting objects.
\end{proof}
Observe that these numbers give the Catalan numbers when $m = 1$.

Next, we want to count the number of coloured quivers in the
$m$-mutation class, and by Corollary \ref{correspondence} we consider
$(m+2)$-angulations that are equivalent under rotation. In \cite{hpr}
they consider $s$-clusters rooted at a cell, and by applying P\`olyas
Theorem they obtain the series 

$$F_s(x) = \sum_{r=1}^{\infty} F_r^{(s)} x^r = xZ(C_s;1+U_s(x)),$$
where $F_r^s$ gives the number of $s$-clusters with $r$ cells rooted
at a cell and $C_s$ is the cyclic group of order $s$.

Next they consider unrooted clusters, and by applying P\`olyas Theorem
again they obtain give the generating function $$H_s(x) = F_s(x)
-\frac{1}{2}(U^2_s(x) - U_s(x^2)).$$ This counts the number of
unrooted $s$-clusters, and so hence $s$-angulations of polygons, where
two $s$-angulations are equivalent if they can be rotated into one another. 

Now we want to compute the coefficients to obtain explicit formulas,
which they do not do in \cite{hpr}.

First, we consider $U^t_s(x)$ for some $t$. We apply Lagrange's
inversion theorem (see for example \cite{hp}, Chapter 1.7) on the equation

$$U_s(x) = x(1+U_s(x))^{n-1}.$$

Set $y=U_s(x)$, and we get

$$y^t = \sum_{i=1}^{\infty}
\frac{x^i}{i!}\left(\left(\frac{d}{dy}\right)^{i-1}\left(y^{t-1}(1+y)^{(n-1)i}\right)\right)_{y=0}.$$

Differentiating and substituting we get
$$U_s^t(x)=
\sum_{i=t}^{\infty}\frac{t}{i}\binom{i(s-1)}{i-t}x^i.$$
We see that this agrees with the expression for $U_s(x)$ for $t=1$.

Now we compute $F_s(x)$. We get

$$F_s(x) = x \text{Z}(\text{C}_s;1+U_s(x))=$$

$$=x\frac{1}{s}\sum_{d|s} \phi(d)(1+U_s(x^d))^{s/d}=$$

$$= \frac{1}{s}\sum_{d|s}\phi(d) x
\sum_{t=0}^{s/d}\binom{n/d}{t}U_s(x^d)^t=$$

$$= \frac{1}{s}\sum_{d|s}\phi(d) x
\sum_{t=0}^{s/d}\binom{n/d}{t}\sum_{i=t}^{\infty}\frac{t}{i}\binom{i(s-1)}{i-t}x^{di}=$$ 

$$= \frac{1}{s}\sum_{d|s}\phi(d) x \left(1 +
\sum_{t=1}^{s/d}\binom{n/d}{t}\sum_{i=t}^{\infty}\frac{t}{i}\binom{i(s-1)}{i-t}x^{di}\right)=$$   

$$= \frac{1}{s}\sum_{d|s}\phi(d) \left(x +
\sum_{t=1}^{s/d}\binom{n/d}{t}\sum_{i=t}^{\infty}\frac{t}{i}\binom{i(s-1)}{i-t}x^{di+1}\right)$$

$$= \frac{1}{s}\sum_{d|s}\phi(d) x +
\frac{1}{s}\sum_{d|s}\phi(d)
\sum_{t=1}^{s/d}\binom{n/d}{t}\sum_{i=t}^{\infty}\frac{t}{i}\binom{i(s-1)}{i-t}x^{di+1}=$$

$$= x +
\frac{1}{s}\sum_{d|s} \sum_{t=1}^{s/d} \sum_{i=t}^{\infty}\phi(d)\frac{t}{i}
\binom{n/d}{t}\binom{i(s-1)}{i-t}x^{di+1}.$$   

We are interested in the coefficients of this series. Clearly the
coefficient of $x$ is always $1$. Suppose we want
the coefficient of $x^k$. Then $di+1 = k$ and so $i =
\frac{k-1}{d}$. This means that we want to sum over all $d$ that also 
divides $k-1$. Also we want to sum over all $t$ which give a
contribution, that is up to $\min\{\frac{s}{d},\frac{k-1}{d}\}$. We get that
the coefficient of $x^k$ is given by

$$=\sum_{d|s \text{ \& } d|(k-1)} \sum_{t=1}^{\min\{\frac{s}{d},\frac{k-1}{d}\}}
\left[\frac{\phi(d)dt}{s(k-1)}\binom{s/d}{t}\binom{(s-1)\frac{k-1}{d}}{\frac{k-1}{d}-t}
\right].$$ This formula counts the number of clusters rooted at a cell.

Now we need to compute the coefficients of $$\frac{1}{2}(U^2_s(x) -
U_s(x^2)).$$

We get
$$\sum_{i=2}^{\infty} \frac{1}{i}\binom{(s-1)i}{i-2}x^i -
\sum_{j=1}^{\infty}\frac{1}{2j}\binom{(s-1)j}{j-1}x^{2j}.$$

The coefficient of $x^k$ is given by

$$\frac{1}{k}\binom{(s-1)k}{k-2} -
\frac{1}{k}\binom{(s-1)(k/2)}{(k/2)-1},$$
where the last term is omitted if $2 \not| k$.

We obtain the following theorem, which follows from the above
discussion and Corollary \ref{correspondence}. See some examples in
Figure \ref{fignumbers}. 

\begin{thm} The number of non-isomorphic coloured quivers in the
  $m$-mutation class of a quiver of Dynkin type $A_{n}$ is given by
$$\sum_{d|n \text{ $\mathrm{\&}$ }d|m+2}\sum_{t=1}^{\min\{\frac{m+2}{d},\frac{n}{d}\}}
\left[\frac{\phi(d)dt}{(m+2)n}\binom{(m+2)/d}{t}\binom{(m+1)\frac{n}{d}}{\frac{n}{d}-t}
\right]-$$ 
$$\frac{1}{n+1}\binom{(m+1)(n+1)}{n-1} +
\frac{1}{n+1}\binom{(m+1)(n+1)/2}{(n+1)/2-1},$$
where the last term is omitted if $2 \not| n+1$. 
\end{thm}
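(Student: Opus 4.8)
The plan is to read the answer off the generating-function coefficients already computed, after using Corollary \ref{correspondence} to translate the counting problem into the cell-growth language. The genuine mathematical content — the bijection of Corollary \ref{correspondence} and the Lagrange-inversion/P\`olya extractions of the coefficients of $F_s$ and of $\frac{1}{2}(U_s^2(x)-U_s(x^2))$ — all precedes the statement, so the proof of the theorem itself is essentially a substitution. I expect the only real hazard to be index bookkeeping.

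First I would invoke Corollary \ref{correspondence} with $n$ replaced by $n+1$: since it identifies the $m$-mutation class of coloured quivers of type $A_{n-1}$ with the $(m+2)$-angulations of $P_{n,m}$ taken up to rotation, the $m$-mutation class for type $A_n$ is in bijection with the $(m+2)$-angulations of $P_{n+1,m}$ up to rotation, where $P_{n+1,m}$ has $(n+1)m+2$ vertices. Next I would pass to the cell-growth picture. Under the correspondence between $s$-angulations and $s$-clusters with $s = m+2$, an $(m+2)$-angulation with $r$ cells corresponds to a polygon with $r(s-2)+2 = rm+2$ vertices; setting $rm+2 = (n+1)m+2$ forces $r = n+1$. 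Since $H_s(x) = F_s(x) - \frac{1}{2}(U_s^2(x) - U_s(x^2))$ counts unrooted $s$-clusters (equivalently $s$-angulations up to rotation), the number sought is precisely the coefficient of $x^{n+1}$ in $H_{m+2}(x)$. Here I would flag, as a point to confirm, that $F_s$ is built from the cyclic group $C_s$ rather than a dihedral group, so that $H_s$ indeed counts up to rotation only and matches the rotation-equivalence of Corollary \ref{correspondence}.

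The remaining step is pure substitution $s = m+2$, $k = n+1$ into the two coefficient formulas. In the coefficient of $x^k$ in $F_s(x)$ the divisibility condition $d \mid s$, $d \mid (k-1)$ becomes $d \mid (m+2)$, $d \mid n$; the prefactor $\frac{\phi(d)dt}{s(k-1)}$ becomes $\frac{\phi(d)dt}{(m+2)n}$; and the binomials become $\binom{(m+2)/d}{t}\binom{(m+1)\frac{n}{d}}{\frac{n}{d}-t}$, reproducing the first line of the statement (with the upper limit $\min\{\frac{m+2}{d},\frac{n}{d}\}$). In the coefficient of $x^k$ in $\frac{1}{2}(U_s^2(x)-U_s(x^2))$, namely $\frac1k\binom{(s-1)k}{k-2} - \frac1k\binom{(s-1)(k/2)}{(k/2)-1}$, the same substitution gives $\frac{1}{n+1}\binom{(m+1)(n+1)}{n-1} - \frac{1}{n+1}\binom{(m+1)(n+1)/2}{(n+1)/2-1}$; subtracting this from the $F$-part yields exactly the sign pattern $-\,\binom{(m+1)(n+1)}{n-1}/(n+1) + \binom{(m+1)(n+1)/2}{(n+1)/2-1}/(n+1)$ in the theorem.

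The main point to check carefully is therefore the shift between $A_n$ and $A_{n-1}$ (hence between $P_{n+1,m}$ and $P_{n,m}$), the induced value $r = n+1$ of the number of cells, and the parity translation: since the $U_s(x^2)$ term contributes only to even powers of $x$, the final binomial survives exactly when $k = n+1$ is even, i.e. when $2 \mid (n+1)$, which is the stated omission rule. Once these identifications are made, the formula follows directly from Corollary \ref{correspondence} and the displayed coefficient computations.
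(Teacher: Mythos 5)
Your proposal is correct and follows exactly the route the paper takes: the theorem is obtained by applying Corollary \ref{correspondence} (shifted so that type $A_n$ corresponds to $P_{n+1,m}$, hence to unrooted $(m+2)$-clusters with $n+1$ cells) and substituting $s=m+2$, $k=n+1$ into the coefficient formulas already extracted for $F_s(x)$ and $\frac{1}{2}(U_s^2(x)-U_s(x^2))$. The bookkeeping points you flag — the $A_n$ versus $A_{n-1}$ shift, $r=n+1$, the cyclic (not dihedral) group in $Z(C_s;\cdot)$, and the parity condition $2\mid(n+1)$ for the final binomial — are exactly the right ones, and all check out.
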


From this formula we can obtain the formula given in \cite{to1} for
the number of elements in the mutation class of $A_n$. In this case $m
= 1$, and $d$ takes the value $1$ and the value $3$ if $3 | n$. After
some calculations we obtain the formula,
namely $$C(n+1)/(n+3)+C((n+1)/2)/2+(2/3)C(n/3),$$ where $C(i)$ is the
$i$'th Catalan number and the second term is omitted if $(n+1)/2$ is
not an integer and the third term is omitted if $n/3$ is not an
integer.

\small
\begin{center}
\begin{figure}
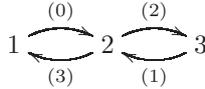

\begin{tabular}{|l|l|l|l|l|}
& m=1&m=2&m=3&m=4\\
\hline
n=2&1&2&2&3\\
n=3&4&7&12&19\\
n=4&6&25&57&118\\
n=5&19&108&366&931\\
n=6&49&492&2340&7756\\
n=7&150&2431&16252&68685\\
n=8&442&12371&115940&630465\\
n=9&1424&65169&854981&5966610\\
n=10&4522&350792&6444826&57805410\\
n=11&14924&1926372&49554420&571178751\\
n=12&49536&10744924&387203390&5737638778\\
n=13&167367&60762760&3068067060&58455577800\\
n=14&570285&347653944&24604111560&602859152496\\
n=15&1965058&2009690895&199398960212&6283968796705\\
n=16&6823410&11723100775&1631041938108&66119469155523\\
n=17&23884366&68937782355&13451978877748&701526880303315\\
n=18&84155478&408323229930&111765327780200&7498841128986110\\
n=19&298377508&2434289046255&934774244822704&80696081185767000\\
n=20&1063750740&14598011263089&7865200653146910&873654669882575000\\

\end{tabular}\caption{\label{fignumbers}Some numbers of coloured quivers in
  the mutation classes for various values of $n$ and $m$.}  
\end{figure}
\end{center}
\normalsize

\section{Further remarks}

In \cite{m} the author gives a description of the Gabriel quivers in
the $m$-mutation class of quivers of Dynkin type $A$ together with the
relations. Let $Q$ be a quiver in the $m$-mutation class of $A_n$, and
let $\Delta$ be a corresponding $(m+2)$-angulation. Then $$i
\rightarrow j \rightarrow k$$ is a zero path if and only if the
corresponding $m$ diagonals of $i$, $j$ and $k$ are in the same
$(m+2)$-gon in $\Delta$. It is also shown that there are no
commutativity relations.  

Let $\Delta$ and $\Delta'$ be two $(m+2)$-angulations, and
$Q_{\Delta}$ and $Q_{\Delta'}$ the corresponding coloured quivers. We
know that $Q_{\Delta}$ and $Q_{\Delta'}$ are isomorphic if and only if
$\Delta$ and $\Delta'$ are rotations of each other. However, if
$Q_{\Delta}$ and $Q_{\Delta'}$ are not isomorphic as coloured quivers,
the Gabriel quivers may still be isomorphic. Also, two non-isomorphic
coloured quivers can give rise to the same $m$-cluster tilted
algebra. For example, if $Q$ is the quiver 

$$\xymatrix{1 \ar@/^/[r]^{(0)} & 2 \ar@/^/[l]^{(3)}\ar@/^/[r]^{(2)} &
  3\ar@/^/[l]^{(1)}}$$ 

occurring for a $3$-cluster tilted algebra, then the quiver $Q'$ 

$$\xymatrix{1 \ar@/^/[r]^{(0)} & 2 \ar@/^/[l]^{(3)}\ar@/^/[r]^{(1)} &
  3\ar@/^/[l]^{(2)}}$$ 
obtained from $Q$ by mutating at $3$, is not isomorphic to $Q$ as
coloured quivers. However, they give the same $3$-cluster tilted
algebra.



\textbf{Acknowledgements:} The author would like to thank Aslak Bakke
Buan for valuable discussions and comments.

\small

\normalsize

\end{document}